\def\showauthornotes{1}
\newcommand{\Authornote}[2]{{\sf\small\color{blue}{[#1:
			#2]}}}
\newcommand{\Authornote}[2]{}
\theoremstyle{plain}
\newtheorem{theorem}{Theorem}[section]
\newtheorem{cor}[theorem]{Corollary}
\newtheorem{prop}[theorem]{Proposition}
\newtheorem{lemma}[theorem]{Lemma}
\newtheorem*{teoA}{Theorem A}
\newtheorem*{teoB}{Theorem B}
\theoremstyle{definition}
\newtheorem{remark}[theorem]{Remark}
\newtheorem{fact}[theorem]{Fact}
\newtheorem{definition}[theorem]{Definition}
\newtheorem{example}[theorem]{Example}
\newtheorem*{question}{Question}
\newtheorem*{claim*}{Claim}
\newtheorem{claim}{Claim}
\newcounter{proofcount}
\newenvironment{claimproof}[1][Proof of Claim \theclaim.] 
{%
	\proof[#1]%
	
}
{%
	\endproof%
}
\newenvironment{claimproof*}[1][Proof of Claim.] 
{%
	\proof[#1]%
	
}
{%
	\endproof%
}
\newcommand{\nc}{\newcommand}
\nc{\Z}{\mathbb{Z}}
\nc{\Q}{\mathbb{Q}}
\nc{\N}{\mathbb{N}}
\nc{\F}{\mathbb{F}}
\nc{\E}{\mathbb{E}}
\nc{\C}{\mathbb{C}}
\nc{\M}{\mathcal{M}}
\nc{\Fc}{\mathcal{F}}
\nc\LL{\mathcal L}
\nc{\Ult}{\mathfrak{U}}
\nc{\HH}{\mathcal{H}}
\nc\wh{\widehat}
\nc{\GP}{\operatorname{GP}}
\nc{\EE}{\mathcal{E}}
\nc{\dcl}{\operatorname{dcl}}
\nc{\acl}{\operatorname{acl}}
\nc{\nf}[1]{_{\mid {#1}}}
\nc{\restr}[1]{\!\!\upharpoonright_{#1}}
\nc{\sbgp}[1]{\langle\xspace {#1}\xspace\rangle}
\nc{\GO}[1]{G_{#1}^{00}}
\nc\inv{ ^{-1}}
\nc{\tp}{\operatorname{tp}}
\nc\cb{\operatorname{Cb}}
\nc\U{\operatorname{U}}
\nc{\cf}{\text{cf. }}
\nc{\eg}{\text{e.g. }}
\def\Ind#1#2{#1\setbox0=\hbox{$#1x$}\kern\wd0\hbox to
	0pt{\hss$#1\mid$\hss} \lower.9\ht0\hbox to
	0pt{\hss$#1\smile$\hss}\kern\wd0}
\def\Notind#1#2{#1\setbox0=\hbox{$#1x$}\kern\wd0\hbox to
	0pt{\mathchardef\nn="0236\hss$#1\nn$\kern1.4\wd0\hss}\hbox
	 to
	0pt{\hss$#1\mid$\hss}\lower.9\ht0 \hbox to
	0pt{\hss$#1\smile$\hss}\kern\wd0}
\def\indip{\mathop{\ \ \hbox to 0pt{\hss$\mid^{\hbox to
				0pt{$\scriptstyle P$\hss}}$\hss}
		\lower4pt\hbox to 0pt{\hss$\smile$\hss}\ \ }}
\def\nindip{\mathop{\ \ \hbox to 0pt{\hss$\!\not{\mid}^{\hbox to
				0pt{$\scriptstyle\, P$\hss}}$\hss}
		\lower4pt\hbox to 0pt{\hss$\smile$\hss}\ \ }}
\begin{document}
	
	\title{Stability, corners, and other 2-dimensional shapes}
	\date{\today}
	
	\author{Amador Martin-Pizarro, Daniel Palacin and Julia Wolf}
	\address{Abteilung f\"ur Mathematische Logik, Mathematisches 
	Institut,
		Albert-Ludwig-Universit\"at Freiburg, Ernst-Zermelo-Stra\ss e 1, 
		D-79104
		Freiburg, Germany}
			\email{pizarro@math.uni-freiburg.de}
	\address{Departamento de \'Algebra, Geometr\'ia y Topolog\'ia, 	
	Facultad de Ciencias Matem\'aticas, 
		Universidad Complutense de Madrid, Plaza Ciencias 3, 28040, 
		Madrid, Spain}
			\email{dpalacin@ucm.es}
	\address{Department of Pure Mathematics and Mathematical 
	Statistics, Centre for
		Mathematical Sciences, Wilberforce Road, Cambridge CB3 0WB, 
		United Kingdom}
	\email{julia.wolf@dpmms.cam.ac.uk}
	\thanks{The first two authors conducted research partially 
	supported by PID2020-116773GB-I00 as well as by the program GeoMod 
	ANR-19-CE40-0022-01 (ANR-DFG). The second author conducted research 
	partially supported by the grants 2020-T1/TIC-20313 and PID2021-122752NB-I00.}
	\keywords{Model Theory, Local Stability, Additive Combinatorics, 
	Corners}
	\subjclass{03C13, 03C45, 11B30}
	
	\begin{abstract}
		We introduce a relaxation of stability, called almost sure stability, which 
		is insensitive to perturbations by subsets of Loeb 
		measure $0$ in a non-standard finite group. We show that almost sure stability 
		satisfies a stationarity principle in the sense of geometric stability theory for measure independent elements. We 
		apply this principle to deduce the existence of squares 
		in dense almost surely stable subsets of Cartesian products of non-standard finite groups, possibly 
		non-abelian. Our results imply qualitative asymptotic versions for Cartesian products of finite groups. In the final section, we establish the existence of $3\times 2$-grids (and thus of $L$-shapes) in dense almost surely stable $2$-dimensional subsets of finite abelian groups of odd order. 
	\end{abstract}
	
	\maketitle
	
	\section{Introduction}

	In the last few years, local stability, as developed by Hrushovski and Pillay \cite{HP94}, has 
	found several applications \cite{TW19, CPT20, gC21, 
	MPPW21} to questions on asymptotic 
	behaviour in additive combinatorics. Recall that a  subset $A$ of a 
	group 
	$G$ is 
	$k$-\emph{stable} if its Cayley graph 
	 \[ \mathrm{Cay}(G,A)=\{ (g,h) \in G\times G  : \   
	g^{-1} \cdot h \in A\}\] induces no half-graph of height $k$, that is,  
	there is no sequence $(a_1,b_1,\ldots,a_k,b_k)$ in $G^{2k}$ such that the 
	pair $a_i\inv\cdot b_j$ belongs to $A$ if and only if $i\le j$. 
	
	Although the notion of stability has played and continues to play a crucial role in 
	the development of model theory, from a combinatorial perspective it is 
	exceptionally sensitive: the slightest perturbation of the set (adding a single instance of a half-graph) has the 
	potential to destroy it. 
	Motivated by this, we introduce a relaxation of stability which takes such 
	perturbations into account. Given finite sets $X$ and $Y$, a relation $S\subseteq X\times Y$ is \emph{almost surely $k$-stable} if there are few (in the 
	sense of the counting measure associated to $X\times Y$) half-graphs of 
	height $k$ induced by $S$ (see Definition \ref{D:robust}).  Similar 
	attempts to quantify stability appeared in earlier work of
	 Terry and the third author \cite[Section 5]{TW21} and Coregliano and Malliaris \cite[Definition 3.5]{CM22}.\footnote{More recently, and subsequent to the present work, this notion also appeared as $\mu$-stability in work of Chernikov and Towsner \cite[Remark 2.18]{CT24}.}
	
	Our purpose for this work is twofold: on the one hand, we aim to develop the notion of almost sure stability from a model-theoretic point of view.\footnote{Almost sure stability is taken further in the recent preprints \cite{aB25, mG25}.} On the other hand, we hope to further cement the relevance of stability to classical questions in arithmetic 
	combinatorics (in both the abelian and the non-commutative case). The paper thus focuses on the existence of certain 
	two-dimensional shapes, such as corners and  squares, in dense subsets of groups. 
	
    Given an abelian group $G$, a (non-trivial) \emph{corner} in a subset 
    $S\subseteq G\times G$ is given by a pair $(x, y)$ in $G^2$ along with $d\ne 0_G$ in $G$  with the property that all three pairs $(x,y)$, $(x+d, y)$, and $(x,y+d)$ 
lie in $S$. It is well known \cite{S13} that the existence of 
non-trivial corners for dense subsets in $G\times G$ implies Roth's theorem 
\cite{kR53} on 
$3$-term arithmetic progressions in dense subsets $A$ of $G$. Indeed, a non-trivial corner in $\mathrm{Cay}(G,A)\subseteq G\times G$ determined by $(x, y)$ and $d$ 
 gives rise, by a simple projection, to the non-trivial $3$-term arithmetic progression $x-y-d, x-y, 
x-y+d$ in $A$. 

 The existence of corners in subsets $S$ of $G\times G$ of positive 
density  follows straight from the multidimensional 
Szemer\'edi theorem. The latter was first proved qualitatively in 
\cite{FK78}, and quantitative versions (albeit with poor bounds) 
follow from the hypergraph regularity lemmas of Gowers 
\cite{G07} and Nagle, R\"odl, Schacht and Skokan 
\cite{NRS06,RS04}. However, finding corners is not quite as difficult as 
resolving the full multidimensional Szemer\'edi theorem, and indeed 
some remarkable progress has been made towards a strongly quantitative resolution \cite{ASz74,S05,S06,G05,LM07}.\footnote{See also the spectacular very recent preprint \cite{J25}.}
 
In the case of a non-abelian group $G$, we distinguish between 
so-called 
\emph{naive corners}, that is, configurations of the form 
$(x, y), (x \cdot g, y), (x, y \cdot g)$ with $g\neq 1_G$, and \emph{BMZ 
corners} (for Bergelson, McCutcheon and Zhang \cite{BMZ97}), which are of 
the  form $(x, y), (x \cdot g, y), (x \cdot g, y\cdot g)$ with $g\neq 1_G$. Note that the latter configuration can also be written as $(x, y), (g \cdot x, y), (x, y\cdot g)$, by first writing $z=x\cdot g$ and then replacing the set $S\subseteq G\times G$ with its image after applying the inverse operation in the first coordinate. Similarly, naive corners can be expressed as $(x,y)$, $(g\cdot x,y)$, $(g\cdot x, y\cdot g)$.
	
In abelian groups, naive and BMZ corners are easily seen to be equivalent. Some abelian techniques can be adapted to 
the non-abelian context for BMZ corners, but do not seem to apply to 
naive corners \cite{BMZ97}, and an extensive literature now exists \cite{S13,A16,BRZK17}.
	
Other 2-dimensional shapes are of interest in abelian groups, for instance \emph{squares}, which are configurations given by $(x, y)\in G^2$ together with $d\neq 0_G$ in $G$ such that all of $(x,y)$, $(x+d, y)$, $(x,y+d)$, $(x+d,y+d)$ lie in $S$. We observe that the non-abelian formulation of a square, where we require 
\[(x,y), (x\cdot g, y), (x, y\cdot g), (x\cdot g, y\cdot g)\]
to lie in $S$, actually contains both a naive and a BMZ corner (and could have been expressed in a number of equivalent ways).  
	
In the presence of almost sure stability, that is, when
few half-graphs of height $k$ are induced by the subset $S$ of $G\times G$, we are able to
obtain qualitative asymptotic bounds on the density of $S$ that guarantees the existence of a square in $S$. In fact, our methods work in both abelian and non-abelian finite groups (see Corollary \ref{C:corners}).

Moreover, we prove the following strengthening in the setting of finite groups of bounded exponent (see Corollary 
\ref{C:corners_bddexp}).

\begin{teoA}
	Given integers $k, r\ge 1$ and real numbers $\delta, \epsilon>0$ there exists 
	an integer 
	$N=N(k, r, \delta, \epsilon)\ge 1$ and real numbers $\theta=\theta(k, r,  
	\delta, \epsilon)>0$ and $\eta=(k, r,  
	\delta, \epsilon)>0$ with the following property. 
	
	Let $G$ be a finite group of 
	exponent at most $r$, and consider a relation $S\subseteq G\times G$ of size $|S|\ge \delta |G|^{2}$ such that the collection  
$\HH_k(S)$ of all 
half-graphs of height $k$ induced by $S$ on  $G$ has size $|\HH_k(S)|\le 
\theta  |G|^{2k}$. For an element $g$ in $G$,  set \[ \Lambda_\Box(S)_g=\{(a, 
	b)\in G^2 : (a,b), (a\cdot g, b), (a, b\cdot g), (a\cdot g,  
	b\cdot g) \in S \}.  \] 
Then there exists a subgroup $H$ of $G$ of index at most 
	$N$ such that
	 \[  
	 \left|\left\{ g\in H : \ |\Lambda_\Box(S)_g| <\eta |S|\right\} \right| <  \epsilon |H|.\]
\end{teoA}

Going beyond corners, in a recent breakthrough Peluse \cite{sP22} obtained the first reasonable bound for the existence of so-called 
\emph{$L$-shapes} (where each of the pairs $(x,y)$, $(x+d, y)$, $(x,y+d)$, $(x,y+2d)$ belongs to $S$) in dense subsets of $\mathbb{F}_p^n\times \mathbb{F}_p^n$.

In the presence of almost sure stability we are able to establish the existence of even larger $2$-dimensional patterns, including $3\times2$-grids (see Corollary \ref{C:Lshape}).

    \begin{teoB}
	Given an integer $k\ge 1$ and a real number $\delta>0$, there is an integer 
$N(k, \delta)\ge 1$ and real numbers $\theta=\theta(k, \delta)>0$ and  
$\epsilon=\epsilon(k, \delta)>0$ with the following property. 

Let $G$ be a finite abelian group of odd order with $|G|\ge N$, and consider a relation $S\subseteq G\times G$ of size $|S|\ge \delta |G|^{2}$ such that the collection  
$\HH_k(S)$ of all 
half-graphs of height $k$ induced by $S$ on  $G$ has size $|\HH_k(S)|\le \theta 
|G|^{2k}$. 

Then the set 
\[ \Lambda_{3\times 2}(S)= \left\{ (a, b, g) \in G^3 : 
\  
\parbox{8cm}{$(a, b), 
(a+ g, 
b), (a+2g, b),  (a, b+g), \\ (a+g,  b+ g) \text{ 
	and }  (a+ 2g, b+ g)  \text{ all lie in } S$}
\right\}\]
has size  $|\Lambda_{3\times 2}(S)| \ge \epsilon |G|^{3}$. In 
particular, the relation $S$ contains an $L$-shape. 
\end{teoB}

The techniques used to prove Theorems A and B are of 
   a model-theoretic flavor, and rely heavily on a stationarity result for 
    almost surely stable relations (see Theorem \ref{T:robuststable_main}).   In forthcoming  
	work, we establish some of the results presented here in the language 
	of additive combinatorics. While the techniques used will be different, the   
	structure of the proofs will follow a similar pattern. The present paper should therefore be viewed as an effort to highlight the close interactions and foster 
	stronger synergies between model theory and additive combinatorics. 
	
	In order to render the presentation of this first work more accessible to 
	an audience who may not be versed in the language of model theory, we 
	have adapted the results using some of the (perhaps more familiar) machinery of ultraproducts, 
	with the aim of keeping the presentation as self-contained as possible. This choice 
	should not present an obstacle for the model-theoretic reader, who will easily translate the terminology and techniques to the more general setting.
	
	\textbf{Outline of the paper.} Sections 2 and 3 introduce the background needed for the remainder of the paper. In Section 4 we define almost sure stability and prove a stationarity principle for almost stable relations. The main theorems concerning corners and squares are derived in Section 5, whilst Section 6 considers more complex 2-dimensional configurations.
	
	\textbf{Acknowledgement.} The authors are grateful for the anonymous referee for helpful comments and corrections.

	\section{Non-standard finite groups and Loeb 
	measures}\label{S:measure}

	Recall that a non-principal ultrafilter $\Ult$ on $\N$ is a 
	non-empty collection of infinite sets closed under finite 
	intersections and 
	with the property that either a subset of $\N$ or its complement 
	belongs to 
	$\Ult$.  Such ultrafilters exist and each one induces a finitely 
	additive probability measure 
	on all subsets of $\N$, taking values $0$ and $1$ only, such 
	that no 
	finite subset has measure $1$. 
	
	\begin{definition}\label{D:pseudofte}A \emph{non-standard finite 
	group} (sometimes referred to in the literature as a \emph{hyperfinite group}) has as underlying set $\prod_{n\to \Ult} G_n$ for  some 
		collection $(G_n)_{n\in \N}$ of finite groups of strictly 
		increasing size (that is $|G_n|<|G_{n+1}|$ for all $n$ in $\N$) 
		and some non-principal 
		ultrafilter  $\Ult$ on $\N$,  where the set 
		$\prod_{n\to \Ult} G_n$ consists of the infinite Cartesian 
		product  $\prod_{n\in \N} G_n$ modulo the equivalence relation 
		\[ (g_n)_{n\in\N} 
		\sim_\Ult (h_n)_{n\in\N}\ \Longleftrightarrow \{n\in \N  : \  
		g_n=h_n\} \in 
		\Ult.\] That is, we identify two sequences if they are equal 
		$\Ult$-almost everywhere (with respect to the measure 
		induced by $\Ult$ on all subsets of $\N$). 
	\end{definition}
	If we denote the class of $(g_n)_{n\in\N}$ modulo $\sim_\Ult$ by 
	$[(g_n)_{n\in\N}]_\Ult$, it follows immediately from the definition 
	of the 
	non-principal ultrafilter $\Ult$ that the quotient $\prod_{n\to 
	\Ult} G_n$ is a 
	group, where \[ 
	[(g_n)_{n\in\N}]_\Ult \cdot [(h_n)_{n\in\N}]_\Ult = [(g_n\cdot 
	h_n)_{n\in\N}]_\Ult,\] is well-defined and has neutral element 
	$1_{\prod_{\Ult} 
	G_n} = 
	[(1_{G_n})_{n\in\N}]_\Ult$. Furthermore, the inverse  
	$[(g_n)_{n\in\N}]_\Ult\inv= 
	[(g_n\inv)_{n\in\N}]_\Ult$. 
	
	Note that a finite Cartesian product of non-standard finite groups is 
	again 
	a non-standard finite group. 
	
	\begin{definition}\label{D:internal}
		A subset $X$ of a non-standard finite group $\prod_{n\to \Ult} 
		G_n$ is \emph{internal} if 
		there exists a collection $(X_n)_{n\in\N}$ such that each $X_n$ 
		is a subset of 
		$G_n$ and  \[X= \left\{ [(g_n)_{n\in\N}]_\Ult \in \prod_{n\to 
		\Ult} G_n :  \ g_n 
		\text{ belongs to } X_n \text{ for $\Ult$-almost all $n$  in } \N 
		\right\}.\]
	\end{definition}
	Given an internal subset $X$ of $G=\prod_{n\to \Ult} G_n$ 
	induced from the 
	collection $(X_n)_{n\in\N}$ as above, we will denote the subset 
	$X_n$ of $G_n$ 
	by $X(G_n)$. Likewise, we will sometimes denote the internal set 
	$X$ by $\prod_{n\to \Ult} X_n$.
	\begin{remark}\label{R:internal}~
		\begin{enumerate}[(a)]
			\item Every finite subset of a non-standard finite group is 
			internal.
			\item The collection of internal sets is closed under Boolean 
			combinations and 
			projections. Given a (group) word $w=w(u, v_1,\ldots, v_n)$ 
			and a tuple $\bar 
			a=(a_1,\ldots, a_n)$ in a non-standard finite group $G$, the set 
			\[X=\left\{ g\in G  : \  
			w(g, \bar a)=1_G\right\} \]   is internal.
			\item Given internal subsets $X_1,\ldots,X_n$ of a 
			non-standard finite group $G$, the set $X_1\times\dots\times 
			X_n$ is an internal subset of the $n$th Cartesian product 
			$G^n=G\times\dots\times G$. 
			\item Given an internal set $X$ of $G$, the subset
			\[
			\left\{ (x,y) \in G\times G  : \  x\cdot y\in X \right\}
			\]
			is an internal subset of $G\times G$. In particular, every translate (left or right) of an 
			internal set is internal. 
			\item In this particular set-up, \L o\'s's Theorem becomes 
			tautologically immediate: given an internal set $X$ in a 
			non-standard finite group $G= 
			\prod_{n\to\Ult} G_n$,  we have that \[ X\ne \emptyset \ 
			\Longleftrightarrow \  
			X(G_n) \ne \emptyset \text{ for $\Ult$-almost all $n$  in } \N. 
			\]
		\end{enumerate}
	\end{remark}
	
	Every non-standard finite group arising as a limit of finite groups 
 has 
cardinality continuum 
	and so it has continuum many internal sets. However, whenever we 
	restrict our attention to only countably many internal sets 
	non-standard finite groups have the following remarkable property, 
	which in model-theoretic terms is called {\em 
	$\aleph_1$-saturation}.
	
	\begin{fact}[$\aleph_1$-saturation]\label{F:Saturation}
		Let $G$ be a non-standard finite group. Every internal countable 
		cover of an internal set admits a finite sub-covering: given an 
		internal subset $X$ of $G$ and a countable family $(Y_n)_{n\in 
		\N}$ of internal subsets of $G$ such that $X=\bigcup_{n\in\N} 
		Y_n$, there is some natural number $k$ such that 
		\[
		X=Y_{n_1} \cup \dots \cup Y_{n_k}.
		\]
		Equivalently, a countable intersection of internal set is 
		non-empty whenever every finite sub-intersection is. 
	\end{fact}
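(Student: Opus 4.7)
The two formulations are equivalent by taking complements: since internal sets are closed under Boolean combinations by Remark \ref{R:internal}(b), each $G\setminus Y_n$ is internal, and $X\subseteq \bigcup_n Y_n$ fails to admit a finite subcover if and only if the internal sets $\{X\cap (G\setminus Y_{n_1})\cap\cdots\cap(G\setminus Y_{n_k})\}$ all have non-empty finite intersections while their total intersection is empty. So I will focus on the second formulation.

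Let $(Z_n)_{n\in\N}$ be internal subsets of $G=\prod_{n\to\Ult}G_n$, each induced from a sequence $(Z_n(G_m))_{m\in\N}$, and assume that every finite intersection $Z_1\cap\cdots\cap Z_k$ is non-empty. By the tautological form of \L o\'s's Theorem in Remark \ref{R:internal}(e), the set
\[
A_k=\{m\in\N\,:\, Z_1(G_m)\cap \cdots \cap Z_k(G_m)\ne\emptyset\}
\]
belongs to $\Ult$, and clearly $A_1\supseteq A_2\supseteq\cdots$. The plan is to produce a single element $[g_m]_\Ult$ that lies in every $Z_n$ by a standard diagonal construction along the descending chain $(A_k)$.

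Concretely, for each $m\in\N$ I will define $k(m)$ to be the largest integer $k\leq m$ with $m\in A_k$ (setting $k(m)=0$ otherwise), and pick $g_m\in Z_1(G_m)\cap\cdots\cap Z_{k(m)}(G_m)$, which is non-empty by definition of $A_{k(m)}$; when $k(m)=0$ I pick $g_m$ arbitrarily. For a fixed $n$, whenever $m\geq n$ and $m\in A_n$ we have $k(m)\geq n$, so $g_m\in Z_n(G_m)$. Since both $\{m\geq n\}$ (cofinite) and $A_n$ belong to $\Ult$, their intersection does too, hence $\{m:g_m\in Z_n(G_m)\}\in\Ult$, which by Remark \ref{R:internal}(e) gives $[g_m]_\Ult\in Z_n$. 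As $n$ was arbitrary, $[g_m]_\Ult\in\bigcap_n Z_n$.

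The only real subtlety is the diagonal choice of $k(m)$: the naive approach of fixing representatives for each $Z_n$ independently does not work, since there is no control on how the indices where $g_m\in Z_n(G_m)$ interact as $n$ varies. The truncation $k(m)\le m$ ensures that $k(m)$ is well-defined for each $m$, while the condition $m\in A_{k(m)}$ ensures that for every fixed $n$ the set of $m$ with $k(m)\geq n$ still lies in $\Ult$. This is the essential use of $\aleph_1$ (countably many constraints), and it is precisely what would fail for uncountable families.
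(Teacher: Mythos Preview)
Your argument is correct; it is the standard diagonal construction proving countable saturation of an ultraproduct over a non-principal ultrafilter on $\N$. The paper itself does not give a proof of Fact~\ref{F:Saturation}: it is stated as a background fact and used freely thereafter, so there is no ``paper's own proof'' to compare against. Your write-up would serve perfectly well as the missing justification.

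Two very minor remarks. First, in the last step you invoke Remark~\ref{R:internal}(e) to conclude $[g_m]_\Ult\in Z_n$, but strictly speaking that remark is about non-emptiness of an internal set; membership of a specific element follows directly from Definition~\ref{D:internal} (an element lies in the internal set iff its coordinates lie in the components $\Ult$-almost everywhere). Second, your claim that $k(m)\ge n$ whenever $m\ge n$ and $m\in A_n$ silently uses the nesting $A_1\supseteq A_2\supseteq\cdots$ you noted earlier (so that $n$ is a candidate in the maximisation defining $k(m)$); this is fine, just worth making explicit. Neither point affects the validity of the proof.
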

	In particular, every infinite internal subset of a 
	non-standard 
	finite group must be uncountable.

	Every non-standard finite group is equipped with a finitely 
	additive probability measure on the Boolean algebra of internal 
	sets, induced by the normalised counting measure on every finite 
	group.
	\begin{definition}\label{D:Measure}
		The {\em Loeb} or {\em non-standard counting measure} of the 
		non-standard finite group $G=\prod_{n\to\Ult} G_n$ is defined 
		for every internal subset $X$ of $G$ as 
		\[
		\mu_G (X) = \lim_{n\to \Ult} \frac{|X(G_n)|}{|G_n|}.
		\]
		In an abuse of notation, we call the value $\mu_G(X)$ the {\em 
		density of $X$}.   
	\end{definition}
	Note that for every real number $r$ in the interval $[0,1]$,
	the measure $\mu_G(X)\ge r$ whenever  $|X(G_n)|\ge r|G_n|$ 
	for 
	$\Ult$-almost all $n$ in $\N$. 
	
	\begin{remark}\label{R:fibreDef}
		Given an internal subset $Z$ of  the non-standard finite group 
		$G^{k+m}$ and a real number $r$ in $[0,1]$, there is an internal 
		subset $Y_r$ of $G^m$ such 
		that 
		\[
		\left\{ y \in G^m  : \  \mu_{G^k} (Z_y) > r  \right\} \subseteq Y_r
		\subseteq \left\{ y \in G^m  : \  \mu_{G^k} (Z_y)\ge r  \right\},
		\]
		where $Z_y=\{x \in G^k  : \  (x,y)\in Z\}$ denotes the 
		fibre of $Z$ over $y$. Namely, set 
		\[
		Y_r = \prod_{n\to \Ult} \left\{ y\in G_n^m  : \  |Z_y(G_n^k)| \ge r 
		|G_n^k| \right\}.
		\]
	\end{remark}
	
	We include the following easy result to demonstrate how information about the measure of non-standard finite objects allows us to provide asymptotic bounds on the cardinalities of the corresponding sequence of finite objects.
	
	\begin{lemma}\label{L:Transfer}
		Consider a family $(G_n,X_n)_{n\in\N}$, where $X_n$ is a 
		subset of the finite group $G_n$. The following are equivalent for 
		every real number $r$ in $[0,1]$.
		\begin{enumerate}[(a)]
			\item For every $\epsilon>0$ there is some $n_0=n_0(r, \epsilon)$ 
			in $\N$  such that $|X_n| \le 
			(r+\epsilon) |G_n|$ for all $n > n_0$.
			\item For every non-principal ultrafilter $\Ult$ on $\N$, we 
			have that $\mu_G(X) \le r$ for the internal set  
			$X=\prod_{n\to\Ult} X_n$ in the non-standard finite group 
			$\prod_{n\to\Ult}G_n$.
		\end{enumerate}
	\end{lemma}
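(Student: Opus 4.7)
The plan is to prove the two directions separately, with the forward direction (a)$\Rightarrow$(b) being a direct unwinding of definitions and the reverse direction (b)$\Rightarrow$(a) going by contrapositive using the standard fact that every infinite subset of $\N$ extends to a non-principal ultrafilter.

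For (a)$\Rightarrow$(b), fix any non-principal ultrafilter $\Ult$ on $\N$ and any $\epsilon>0$. The hypothesis gives some $n_0$ such that the set $A_\epsilon=\{n\in \N\ |\ |X_n|\le (r+\epsilon)|G_n|\}$ contains all integers greater than $n_0$. In particular $A_\epsilon$ is cofinite, hence belongs to $\Ult$. Thus \[\mu_G(X)=\lim_{n\to\Ult}\frac{|X_n|}{|G_n|}\le r+\epsilon,\] and letting $\epsilon\to 0$ yields $\mu_G(X)\le r$.

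For (b)$\Rightarrow$(a), I argue contrapositively. Suppose (a) fails: there exists some $\epsilon>0$ such that the set \[B=\left\{n\in\N\ \Big|\ |X_n|>(r+\epsilon)|G_n|\right\}\] is infinite (otherwise, taking $n_0$ to be its maximum would verify (a) for this $\epsilon$). By a standard application of Zorn's lemma (or equivalently, the ultrafilter lemma), any infinite subset of $\N$ is contained in some non-principal ultrafilter; so choose $\Ult$ with $B\in\Ult$. Then for the internal set $X=\prod_{n\to\Ult} X_n$ we have \[\mu_G(X)=\lim_{n\to\Ult}\frac{|X_n|}{|G_n|}\ge r+\epsilon>r,\] contradicting (b).

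The only slightly non-trivial ingredient is the existence of a non-principal ultrafilter containing a prescribed infinite set, which is standard. There is no real obstacle here — the lemma is essentially a restatement of the definition of the Loeb measure as a limit along $\Ult$, packaged so that the uniformity over all non-principal ultrafilters encodes the uniform asymptotic bound on the finite quotients $|X_n|/|G_n|$.
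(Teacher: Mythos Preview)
Your proof is correct and follows essentially the same route as the paper's: the forward direction uses that cofinite sets lie in every non-principal ultrafilter, and the reverse direction proceeds by contrapositive, noting that the failure of (a) yields an infinite set $B=Q_\epsilon$ which can be extended to a non-principal ultrafilter along which the density exceeds $r$. The only cosmetic difference is that the paper phrases the second direction as a proof by contradiction rather than explicitly as a contrapositive.
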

	\begin{proof}
		$(a)\Rightarrow(b):$ No finite set lies in a non-principal 
		ultrafilter $\Ult$. Hence, by the way the non-standard counting 
		measure on the non-standard finite group $G=\prod_{n\to\Ult} 
		G_n$ has been defined, it follows from (a) that $\mu_G(X)\le 
		r+\epsilon$ for every $\epsilon>0$, so $\mu_G(X)\le r$, as desired.
		
		\noindent $(b)\Rightarrow(a):$ If (a) is false, negating quantifiers, 
		there is some value $\epsilon>0$ such that for every natural number 
		$n_0\ge 1$, we find some $n > n_0$ with  $|X_n| > (r+ \epsilon)
		|G_n|$. Hence the subset
		\[
		Q_{\epsilon} = \left\{ n\in \N  : \   |X_n| > 
		(r+\epsilon) |G_n|  \right\}
		\]
		is infinite. Choose a non-principal ultrafilter $\Ult$ containing 
		each $Q_\epsilon$ (which exists). By 
		definition of the non-standard counting measure 
		on the non-standard finite group $G=\prod_{n\to\Ult} G_n$ we 
		deduce
		\[
		\mu_G(X) = \lim_{n\to\Ult} \frac{|X(G_n)|}{|G_n|} \ge 
		\lim_{n\to\Ult} r+\epsilon = r +\epsilon.  
		\]
	 Thus $\mu_G(X)> r$, 
		which yields the desired contradiction.
	\end{proof}
\begin{remark}\label{R:Transfer}
Taking set-theoretic complements, it follows from Lemma \ref{L:Transfer} that 
the following two conditions are equivalent.
\begin{enumerate}[(a)]
	\item For every $\epsilon>0$ there is some $n_0=n_0(r, \epsilon)$ 
in $\N$  such that $|X_n| \ge 
(r-\epsilon)|G_n|$ for all $n > n_0$.
\item For every non-principal ultrafilter $\Ult$ on $\N$, we 
have that $\mu_G(X) \ge r$ for the internal set  
$X=\prod_{n\to\Ult} X_n$ in the non-standard finite group 
$\prod_{n\to\Ult}G_n$.
\end{enumerate}
\end{remark}
	
	\begin{remark}\textup{(cf. }\cite[Section 
	2]{BT14}\textup{)}\label{R:Fubini}
		Carath\'eodory's  Extension Criterion is satisfied by the Loeb 
		measure by $\aleph_1$-saturation (Fact \ref{F:Saturation}). 
		Therefore, the Loeb measure $\mu_G$ extends to a 
		unique $\sigma$-additive measure on the $\sigma$-algebra 
		generated by the internal subsets 
		of  $G$. Abusing notation, we will not distinguish between 
		$\mu_G$ and its unique extension. 
		
		Furthermore, the family of measures $\{\mu_{G^k}\}_{k\ge 1}$ 
		satisfies the Fubini-Tonelli Theorem \cite[Theorem 19]{BT14}, meaning that for any
 internal subset $Z$ of $G^{n+m}$ the following holds.
		\begin{itemize}
			\item The function $y\mapsto \mu_{G^n}(Z_y)$, resp. 
			$x\mapsto \mu_{G^m}(Z_x)$,  is $\mu_{G^m}$-measurable, 
			resp. $\mu_{G^n}$-measurable.
			\item We have the equality 
			\[
			\mu_{G^{n+m}}(Z) = \int_{G^n} \mu_{G^m}(Z_x) \, 
			\mathrm{d}\mu_{G^n} =\int_{G^m} \mu_{G^n}(Z_y) \, 
			\mathrm{d}\mu_{G^m}.
			\]
		\end{itemize}
		
	\end{remark}

	\section{Tame families and dense subsets}\label{S:tame}

	In view of $\aleph_1$-saturation (Fact \ref{F:Saturation}), we 
	introduce the following notion, 
	which appears in \cite{BB21} as $\bigwedge$-internal and in 
	\cite{lvD15} as $\Pi$-definable. For readers familiar with model 
	theory this corresponds to type-definable sets over a  countable 
	parameter set in a countable language.
	
	\begin{definition}\label{D:InfInt}
		A subset $X$ of $G$ is {\em $\omega$-internal} if it is a countable 
		intersection of internal subsets of $G$. 
	\end{definition}
	
	\begin{remark}\label{R:InfInt}~
		\begin{enumerate}[(a)]
			\item Internal sets are $\omega$-internal. 
			\item Every countable decreasing chain $(Y_n)_{n\in \N}$ of 
			infinite internal subsets of a non-standard finite group $G$ 
			yields a non-empty $\omega$-internal set. By 
			$\aleph_1$-saturation (Fact \ref{F:Saturation}), this 
			$\omega$-internal set is uncountable.
   \item The projection of an $\omega$-internal subset of $G^{n+m}$ onto the first $n$ coordinates is $\omega$-internal in $G^n$. Indeed, if $X$ is a countable intersection of a decreasing family $(X_k)_{k\in\N}$ of internal subsets of $G^{n+1}$, Fact \ref{F:Saturation} gives immediately that $\pi(X)$ equals $\bigcap_{k\in\N} \pi(X_k)$.
 		\end{enumerate}
	\end{remark}
	
	\begin{example}
		Let $G$ be the non-standard finite group $\prod_{n\to \Ult} 
		\Z/2^n\Z$, for some non-principal ultrafilter $\Ult$. For every 
		natural number $k\ge 1$, the function $f_k:x\mapsto 2^k\cdot 
		x$ from $G$ to $G$ is an internal function, that is, its graph is an 
		internal set. Each internal subgroup $\mathrm{Im}(f_k)$ has 
		finite index in $G$ and they form a strictly decreasing chain, 
		which yields an $\omega$-internal uncountable subgroup of $G$ 
		which is $2$-divisible.   
	\end{example}
	
	In view of Fact \ref{F:Saturation} we will need to restrict our 
	attention to suitable countable families of internal sets. 
	Model-theoretically this is done by fixing a suitable countable 
	first-order language and considering the corresponding definable 
	sets, which are 
	always internal, see for example \cite{eH12,lvD15,dP20,BB21}. 
	For the sake of the presentation, we will avoid introducing the 
	terminology of first-order formulae and provide an approach 
	tailored to our purposes.
	
	\begin{definition}\label{D:Tame}
		Let $G=\prod_{n\to \Ult} G_n$ be a non-standard finite group. A 
		family $\mathcal F=\bigcup_{n\in \N} \mathcal F_n$, where each $\mathcal F_n$ consists of internal subsets of $G^n$,  is {\em tame} if it satisfies the following 
		conditions:
		\begin{enumerate}[(I)]
			\item Every internal set of the form 
			\[
			X_w=\left\{ (g_1,\ldots,g_n)\in G^n  : \  
			w(g_1,\ldots,g_n)=1_G\right\}, \]
			where $w=w(u_1,\ldots,u_n)$ is a (group) word, belongs to 
			$\mathcal F_n$.
			\item $\mathcal F$ is closed under finite Cartesian products, Boolean 
			combinations, projections and permutations of the coordinates.
			\item Given an internal subset $Z$ of $G^n$ in $\mathcal F_n$, 
			the internal set
			\[
			\left\{ (x,y)\in G^n\times G^n  : \  x\cdot y\in Z \right\}
			\]  
			belongs to $\mathcal F_{2n}$.
			\item Given an internal subset $Z$ of $G^{k+m}$ in $\mathcal 
			F_{k+m}$ and a rational number $q$ in $[0,1]$, there is an internal 
			set $Y_q$ in $\mathcal F_m$  such that 
			\[
			\left\{ y \in G^m  : \  \mu_{G^k} (Z_y) > q  \right\} \subseteq 
			Y _q \subseteq \left\{ y \in G^m  : \  \mu_{G^k} (Z_y)\ge q  
			\right\},
			\]
			as in Remark \ref{R:fibreDef}. In particular, for every internal 
			subset $Z$ of $G^{k+m}$ in $\mathcal F_{k+m}$, the subset  \[  
			\left\{ y \in G^m  : \  \mu_{G^k} (Z_y) =0  
			\right\}=\bigcap\limits_{n\ge 1} \left(G^m\setminus 
			Y_{\frac{1}{n}} \right)\] is $\omega$-internal and given  by a 
			countable intersection of internal subsets in $\mathcal F_m$. 
		\end{enumerate}
	\end{definition}
	
	\begin{remark}\label{R:tame_vs_language}
		The reader familiar with the notion of first-order formulae in a 
		given language containing the language of groups will 
		notice that a tame family $\mathcal F$ induces a first-order 
		language $\LL$ containing the language of groups after adding, 
		for every internal subset $X$ of $G^n$ in $\mathcal F$, a 
		distinguished predicate $R_X$ such that its interpretation in 
		$G$ equals $X$.  
  
Analogously, every language $\LL_0$ containing the language of groups can be enlarged to a language $\LL\supseteq \LL_0$ (as in \cite[Section 2.6]{eH12}) with $|\LL|\le \max (|\LL_0|,\aleph_0)$ such that $\LL$ induces a tame family of 
		internal 
		sets, by setting \[X_\psi=\{ (a_1,\ldots, a_n) \in G^n \ : \ 
		\psi(a_1,\ldots, a_n) \text{ holds in the $\LL$-structure } G \} \] 
		for every $\LL$-formula $\psi(x_1,\ldots, x_n)$. 
		
		With this translation in mind, we see that
		certain internal sets automatically belong to a tame 
		family $\mathcal F$, without having to write them explicitly as a 
		Boolean combination of suitable projections. For example, if the 
		subset $S$ of $G\times G$ is in $\mathcal F$ (and thus 
		\emph{definable}), then so is the set of BMZ-corners \[ \{ (a,b) \in G^2  : \ \text{ 
		for some $g$ in $G\setminus\{1_G\}$, all of }  (a,b), (g\cdot a, 
		b), (a, g\cdot b) \text{ lie in $S$}   \}.\]
	\end{remark}

	To obtain an example of a countable tame family of 
	internal sets it suffices to close 
	the family of internal sets given by group words under countably many instances of properties (II)-(IV). More generally, the following holds.
	
	\begin{fact}\label{F:TameNum}
		Given any countable family $\mathcal F_0$ of internal sets, there is 
		a countable tame family $\mathcal F$ of internal sets extending 
		$\mathcal F_0$.
	\end{fact}
	
	\begin{remark}
		By Fact \ref{F:TameNum}, whenever we want to apply 
		Lemma \ref{L:Transfer} to a distinguished internal set $X$ in a 
		non-standard finite group, we may always assume that $X$ belongs to a 
		countable tame family. 
	\end{remark}
	
	If the tame family $\mathcal F$ arises as the family of definable 
	sets with respect to a fixed language $\LL$ as explained in Remark 
	\ref{R:tame_vs_language}, then the following notion of richness 
	corresponds to the classical model-theoretic notion of an elementary 
	substructure in that particular language. 
	
	\begin{definition}\label{D:Rich}
		A subset $M$ of a non-standard finite group  $G$ is {\em rich with 
		respect to the tame family} $\Fc$ if 
		every non-empty fibre $X_{a}$ of $G^n$, where $a$ is tuple in 
		$M$ and $X$ is an internal subset of $G^{n+|a|}$ in $\Fc$, 
		contains an $n$-tuple whose coordinates all lie in $M$, {\it i.e.} 
		$X_a(M)=X_a\cap M^n\neq \emptyset$.
	\end{definition}
	
	A straightforward chain argument, mimicking the proof of 
	Downward 
	L\"owenheim-Skolem for first-order languages, yields the following 
	result.
	
	\begin{remark}[Downward L\"owenheim-Skolem]\label{R:LS}
		Given a countable tame family $\Fc$ and a countable subset $A$ of  a 
		non-standard finite group $G$, 
		there is a countable subset $M$ of $G$ containing $A$ which is rich 
		with respect to $\Fc$.
	\end{remark} 
	
	Notice that every rich subset is in particular an infinite subgroup of 
	$G$. 
	However, a countable rich subset $M$ no longer satisfies the  
	$\aleph_1$-saturation condition for covers of internal subsets. Indeed, the $M$-points of the internal set given by 
	$x=x$ 
	can be covered by countably many singletons (running through 
	every point in 
	$M$), yet it does not admit a finite cover. 
	\medskip
	
	\noindent \textbf{Henceforth, we fix a countable tame 
	family $\Fc$ of internal sets. All internal and $\omega$-internal 
	sets  we shall consider belong to this particular family. }
	\medskip

	Given a subset $A$ of a non-standard finite group $G$, we will 
	denote by $\Fc(A)$ the collection of all fibres $Y_{a}$ of $G^{n}$, 
	with $n$ running over all possible natural numbers, where $a$ is a 
	tuple in $A$ and $Y$ is an internal subset of $G^{n+|a|}$ in 
	$\Fc$. 
\begin{definition}\label{D:def_par}
  With the previous notation, the internal subsets in $\Fc(A)$ are said to be \emph{defined over $A$}. 

An $\omega$-internal set $X=\bigcap_{n\in \N} X_n$ is {\em defined over $A$} if each one of the internal sets $X_n$ is defined over the parameter set $A$.
\end{definition}

\begin{remark}\label{R:def_par}
Note that for every internal set $Z$ of $G^{k+m}$ and every rational number $q$ in $[0,1]$, there exists an internal set $Y_q$ as in Condition (IV) of Definition \ref{D:Tame} which is defined over the same parameters as $Z$. 
\end{remark}

	\begin{definition}\label{D:Type}
		Consider a non-standard finite group $G$ and a countable subset 
		$A$ of $G$. Given a tuple $b$ of elements of $G$, 
		its {\em type  over $A$} is the $\omega$-internal set 
		\[
		\tp(b/A) = \bigcap_{\mathclap{\substack{b\in X  \\ X\in 
		\Fc(A)}}}  X.
		\]
	\end{definition}
	Since the set $\Fc(A)$ is countable, by Remark 
	\ref{R:InfInt} (b) the type $\tp(b/A)$ is an uncountable  
	$\omega$-internal set whenever $b$ does not lie in a finite fibre 
	defined over $A$ of some internal set in $\Fc$ (that is, in model-theoretic terms, whenever the type is not algebraic). Note that $\tp(b'/A) 
	= \tp(b/A)$ if and only if $b'$ belongs to $\tp(b/A)$.  

\begin{remark}\label{R:aleph_1}
     Consider a countable subset 
		$A$ of parameters in a non-standard finite group $G$. Given two tuples $(b, c)$ and $b'$ such that $b'$ belongs to $\tp(b/A)$, there is some tuple $c'$ of length $k=|c|$ such that $(b', c')$ belongs to $\tp((b, c)/A)$ (denoted henceforth, as is standard, by $\tp(b,c/A)$ for brevity). Whilst for model-theorists this is an easy consequence of $\aleph_1$-saturation, we will include a quick proof for the sake of completeness. Indeed, by $\aleph_1$-saturation, it suffices to show that every finite intersection \[(Y_1 \cap \cdots \cap Y_n)_{b'}  = (Y_1)_{b'} \cap \cdots \cap (Y_n)_{b'} \subseteq G^{k}\] is non-empty, where each of the $Y_j$ varies over all internal sets in $\tp(b, c/A)$. Now, the intersection $\bigcap_{j=1}^n Y_j$ belongs to $\tp(b,c/A)$  and thus contains the tuple $(b, c)$. Hence, by Definition \ref{D:Tame}, the projection onto the first $|b|$ coordinates of $\bigcap_{j=1}^n Y_j$ is an internal set defined over $A$ which contains $b$. Since $b'$ belongs to $\tp(b/A)$, there exists some $d$ such that $(b', d)$ belongs to  $\bigcap_{j=1}^n Y_j$, as desired (note that the element $d$ need not be $c'$). 
\end{remark}	
	A classical Ramsey argument and the $\aleph_1$-saturation argument in Remark \ref{R:aleph_1} allows us to produce, out of a given infinite sequence and a countable set of 
	parameters, a new sequence with a remarkable property known as 
	\emph{indiscernibility}. 
	
	\begin{definition}
		We say that a sequence $(a_i)_{i\in \mathbb N}$ is 
		\emph{indiscernible over} the countable subset $A$ (or 
		\emph{$A$-indiscernible}) if for every natural number $n$ and 
		every increasing enumeration $i_0<\dots<i_{n-1}$ we have that 
		$\tp(a_0,\ldots, a_{n-1} / A )$ equals $\tp(a_{i_0},\ldots, 
		a_{i_{n-1}} / A )$,  that is, \[ (a_0,\ldots, a_{n-1}) \in X \ 
		\Leftrightarrow \ (a_{i_0},\ldots, a_{i_{n-1}}) \in X\] for every 
		internal subset $X$ in $\Fc(A)$ of the appropriate arity.
	\end{definition}

	We also define a notion of \emph{density} for a non-standard finite group.
	
	\begin{definition}\label{D:Dense}
		Let $G$ be a non-standard finite group. 
		\begin{itemize}
			\item 
			An $\omega$-internal subset is {\em dense} if it is not contained 
			in any internal set of density $0$ (see Definition 
			\ref{D:Measure}).
			\item 
			Given a countable subset $A$ of $G$ and a tuple $b$ of 
			elements of $G$, we say that $b$ is {\em dense over $A$} if the $\omega$-internal subset 
			$\tp(b/A)$ is dense. 
		\end{itemize}
	\end{definition}
	
	\begin{remark}\label{R:Dense}~
		\begin{enumerate}[(a)]
			\item An internal subset $X$ is dense if and only if it intersects 
			every internal subset of density $1$. Notice that the internal set $X$ is dense (seen as an $\omega$-internal subset) if 
			and only if it has positive density. However, an 
			$\omega$-internal set can be dense and yet  have density $0$ (with 
			respect to the extension of the Loeb measure to the 
			$\sigma$-algebra generated by all internal sets, see 
			\cite[Remark 16]{BT14}).
			\item If an $\omega$-internal set $X=\bigcap_{n\in \N} X_n$ is 
			given by a decreasing chain, then the set $X$ is dense if and 
			only if each internal set $X_n$ is. Indeed, if $X$ were not 
			dense, this would be witnessed by an internal set $Y$ of density $0$. Thus the 
			decreasing intersection \[\bigcap_{n\in \N} (X_n \setminus Y) 
			= \emptyset,\] which, by $\aleph_1$-saturation, would yield that 
			some $X_n\setminus Y$ must be empty.
			\item   If $b$ is dense over $A$, then so is every element in 
			$\tp(b/A)$.
			\item If $b$ is dense over $A\cup \{g\}$, then so are $b\cdot 
			g$ and $g\cdot b$.
		\end{enumerate}
	\end{remark}

	\begin{lemma}\label{L:ExtensionDense}
		Let $X$ be an $\omega$-internal dense subset of a 
		non-standard finite group $G$ defined over a countable subset 
		$A$. If  $B$ is a countable subset of parameters extending $A$, then there 
		exists some element in $X$ which is dense over $B$. 
		
		In particular, if $c$ is dense over $A$, then there is some $c'$ in 
		$\tp(c/A)$ which is dense over $B$.
	\end{lemma}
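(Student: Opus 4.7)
The plan is to apply $\aleph_1$-saturation (Fact \ref{F:Saturation}) to a countable family of internal sets whose intersection will contain the desired element. Since the tame family $\Fc$ is countable and $B$ may be assumed countable (as is implicit in the definition of $\tp(\cdot/B)$ through Definition \ref{D:Type}), the collection $\Fc(B)$ is itself countable. In particular, the subfamily
\[
\mathcal D = \{Y\in\Fc(B) \ |\ \mu_G(Y)=0\}
\]
of density-zero internal sets defined over $B$ can be enumerated as $\{Y_i\}_{i\in\N}$.

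Now I would write $X=\bigcap_{n\in\N} X_n$ as a decreasing intersection of internal sets in $\Fc(A)$, and recall from Remark \ref{R:Dense}(b) that each $X_n$ has positive density since $X$ is dense. The natural family to consider is then
\[
\{X_n \ |\ n\in\N\} \cup \{G\setminus Y_i \ |\ i\in\N\}.
\]
Any finite subfamily has positive-density, hence nonempty, intersection: for indices $n_0, i_1,\ldots,i_\ell$, the set $X_{n_0}\setminus(Y_{i_1}\cup\cdots\cup Y_{i_\ell})$ has density equal to $\mu_G(X_{n_0})>0$. By $\aleph_1$-saturation the full countable intersection is nonempty, so we may pick an element $c\in X$ with $c\notin Y_i$ for every $i\in\N$.

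It remains to verify that the chosen $c$ is dense over $B$. Suppose towards a contradiction that $\tp(c/B)\subseteq Y$ for some internal set $Y$ with $\mu_G(Y)=0$. Since $\Fc(B)$ is countable, the type $\tp(c/B)$ is a countable intersection of internal sets, and the emptiness of $\tp(c/B)\cap(G\setminus Y)$ combined with a second application of $\aleph_1$-saturation produces finitely many $Z_1,\ldots,Z_k\in\Fc(B)$ all containing $c$ whose intersection lies inside $Y$. By closure of $\Fc$ under Boolean combinations (property (II)), the set $Z_1\cap\cdots\cap Z_k$ belongs to $\Fc(B)$, contains $c$, and has density bounded by $\mu_G(Y)=0$, so it must appear as some $Y_{i_0}$ in our enumeration, contradicting $c\notin Y_{i_0}$. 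The ``in particular'' clause then follows at once by applying the main statement to the $\omega$-internal dense set $\tp(c/A)$ in place of $X$. The most delicate point in the argument is not the production of $c$ itself but rather the second invocation of $\aleph_1$-saturation, which is needed to reduce an arbitrary density-zero internal upper bound of $\tp(c/B)$ to one already present in the enumeration $\{Y_i\}_{i\in\N}$.
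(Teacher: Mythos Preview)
Your proof is correct and follows essentially the same route as the paper's: both apply $\aleph_1$-saturation to the countable family consisting of the $X_n$'s together with the complements of the density-zero sets in $\Fc(B)$, checking that finite subfamilies have nonempty (in fact positive-density) intersection. Your ``second invocation'' of saturation is not really an additional idea but rather an explicit unpacking of why avoiding all density-zero sets in $\Fc(B)$ suffices for $\tp(c/B)$ to be dense---something the paper leaves implicit (it is essentially Remark~\ref{R:Dense}(b) applied to $\tp(c/B)$).
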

	\begin{proof}
		Write $X=\bigcap_{n\in \N} X_n$. By $\aleph_1$-saturation, it 
		suffices to show that 
		\[
		X_1\cap \dots\cap X_m  \cap (G\setminus Y_1) \cap \dots \cap 
		(G\setminus Y_m) \neq\emptyset
		\]
		for every $m\ge 1$, where $Y_1, \dots , Y_m$ are fibres of 
		internal sets in $\Fc$ defined over $B$ and of density 
		$0$. If  the above intersection were empty, 
		then $ 	X_1\cap \dots\cap X_m$ would have density $0$, and  
		$X$ 
		would not be dense. 
	\end{proof}

\begin{lemma}\label{L:Type_dense_coord}
Consider a countable set of parameters $A$ and  a finite tuple $(b,c)$
	in	a non-standard finite group $G$ such that $b$ is dense over $A\cup\{c\}$. If $(b',c')$ belongs to $\tp(b,c/A)$, then $b'$ is dense over $A\cup\{c'\}$. 
\end{lemma}
\begin{proof}
 Assume for a contradiction that  $b'$ is not dense over $A\cup\{c'\}$. By Remark \ref{R:Dense} there exists a fibre $X$ of an internal set defined over $A\cup\{c'\}$ of density $0$ and containing $b'$. Write $X=Z_{c'}$ for some internal set $Z\subset G^{k+m}$ defined over $A$, where $k=|b|=|b'|$ and $m=|c|=|c'|$. By Remark \ref{R:def_par} (after taking set-theoretic complements), there is for every natural number $n\ge 1$  an internal set $Y_{\frac{1}{n}}$ defined over $A$ such that
\[
			\left\{ y \in G^m  : \  \mu_{G^k} (Z_y) < \frac{1}{n}   \right\} \subseteq 
			Y _{\frac{1}{n}} \subseteq \left\{ y \in G^m  : \  \mu_{G^k} (Z_y)\le \frac{1}{n}  
			\right\}.
			\]
Now the internal set $\widetilde Z_n=Z\cap \big(G^{k}\times Y_{\frac{1}{n}}\big)$ is defined over $A$ for every natural number $n\ge 1$ and contains $(b',c')$, so $(b,c)$ also belongs to the intersection of all the $\widetilde Z_n$. We conclude that $b$ is not dense over $A\cup\{c\}$, witnessed by the fiber $\widetilde{Z}_{c}$, as desired.  
\end{proof} 
	
Model-theoretically, it is convenient to capture the global 
behaviour of an internal (or definable) set in terms of a suitable 
(dense) element in the set. An example of this translation is the 
following result, which follows from the Fubini-Tonelli Theorem and 
 Caratheodory's Extension Theorem for the Loeb measure.
	
	\begin{fact} \cite[Lemma 1.10 as well as Remarks 1.13 \& 1.14 and Lemma 1.15]{MPP21} 
	\label{F:UdiExercise}
		Let $G$ be a non-standard finite group and $X$ be an $\omega$-internal 
		subset of $G^n$ defined over a countable subset $A$. The 
		following are equivalent.
		\begin{enumerate}[(a)]
			\item There exists some $(b_1,\ldots,b_n)$ in $X$ \emph{in 
				good position} over $A$, that is, each $b_i$ is dense over 
			$A\cup\{b_j:j<i\}$. 
			\item The set $X$ is dense.
		\end{enumerate}
		
		Furthermore, every tuple as in (a) is dense over $A$ 
		with respect to the Loeb measure on $G^n$. 
	\end{fact}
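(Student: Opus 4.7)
The plan is to prove both implications by induction on $n$, peeling off one coordinate at a time using the Fubini--Tonelli formula of Remark \ref{R:Fubini}. The base case $n=1$ is immediate from Lemma \ref{L:ExtensionDense} applied with $B=A$: any $b_1 \in X$ avoiding the countably many $A$-internal Loeb-null subsets of $G$ is dense over $A$, and the lemma supplies such an element whenever $X$ is a non-empty dense internal set; conversely, an element of an $A$-internal null set cannot be dense over $A$.

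For (b)$\Rightarrow$(a), assume $\mu_{G^n}(X)>0$. By Fubini--Tonelli the map $b_1 \mapsto \mu_{G^{n-1}}(X_{b_1})$ is $\mu_G$-measurable with strictly positive integral, so the set
\[ W \;=\; \bigl\{\, b_1 \in G \,:\, \mu_{G^{n-1}}(X_{b_1}) > 0 \,\bigr\} \;=\; \bigcup_{k\ge 1} Y_{1/k} \]
has positive Loeb measure, where each $Y_{1/k} \in \Fc(A)$ is an internal set sandwiched between the strict and non-strict inequalities $\mu_{G^{n-1}}(X_{b_1}) > 1/k$ and $\mu_{G^{n-1}}(X_{b_1}) \ge 1/k$ as supplied by property (IV) of the tame family. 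Thus $W$ is $\omega$-internal, defined over $A$, and, having positive Loeb measure, dense. Lemma \ref{L:ExtensionDense} then produces $b_1 \in W$ dense over $A$. The fibre $X_{b_1} \subseteq G^{n-1}$ is internal, defined over $A \cup \{b_1\}$, and of positive density, so the induction hypothesis applied to it yields $(b_2,\ldots,b_n) \in X_{b_1}$ in good position over $A \cup \{b_1\}$, completing the step.

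For (a)$\Rightarrow$(b) I argue by contraposition. Assume $\mu_{G^n}(X) = 0$ and fix $(b_1,\ldots,b_n) \in X$. By Fubini--Tonelli there is a Loeb-null subset $Z \subseteq G$ off which $\mu_{G^{n-1}}(X_{b_1}) = 0$, and by property (IV) this $Z$ is a countable union of internal sets in $\Fc(A)$, each of Loeb measure $0$. If $b_1 \in Z$, then $b_1$ lies inside an $A$-internal null set and hence fails to be dense over $A$. Otherwise $\mu_{G^{n-1}}(X_{b_1}) = 0$, and the induction hypothesis applied to $X_{b_1}$ over $A \cup \{b_1\}$ yields some $i \ge 2$ for which $b_i$ is not dense over $A \cup \{b_1,\ldots,b_{i-1}\}$. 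The same argument also proves the ``furthermore'': a tuple in good position over $A$ cannot lie in any $A$-internal subset of $G^n$ of Loeb measure $0$, which is precisely the assertion that it is dense over $A$ with respect to $\mu_{G^n}$.

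The main subtlety I anticipate is the interaction between the two notions of ``dense'' (avoidance of null internal sets) and of ``positive Loeb measure'': the two coincide for internal sets but diverge for $\omega$-internal ones. The bridge is property (IV) of a tame family, which guarantees that the exceptional fibre sets produced by Fubini--Tonelli are themselves countable unions of internal sets in $\Fc(A)$, so they can simultaneously obstruct density in the contrapositive direction and serve as dense $\omega$-internal strata for Lemma \ref{L:ExtensionDense} in the forward direction.
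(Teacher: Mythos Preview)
The paper does not give a proof of this statement: it is recorded as a \emph{Fact} with a citation to \cite{MPP21}, so there is no in-paper argument to compare against. Your inductive Fubini argument is the natural one and is essentially correct, but there is one slip you should fix.

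In the step (b)$\Rightarrow$(a) you write $W=\{b_1:\mu_{G^{n-1}}(X_{b_1})>0\}=\bigcup_{k\ge 1}Y_{1/k}$ and then assert that $W$ is $\omega$-internal so that Lemma~\ref{L:ExtensionDense} applies. This is not right: $W$ is a countable \emph{union} of internal sets, not a countable intersection, so it is co-$\omega$-internal rather than $\omega$-internal (indeed Definition~\ref{D:Tame}(IV) explicitly records that it is the complement $\{b_1:\mu_{G^{n-1}}(X_{b_1})=0\}$ which is $\omega$-internal). The repair is immediate: since $\mu_G(W)>0$ and the $Y_{1/k}$ increase, continuity of the extended Loeb measure gives some $k$ with $\mu_G(Y_{1/k})>0$; this $Y_{1/k}$ is internal over $A$ and hence dense, so Lemma~\ref{L:ExtensionDense} produces $b_1\in Y_{1/k}$ dense over $A$, and then $\mu_{G^{n-1}}(X_{b_1})\ge 1/k>0$ feeds the induction exactly as you intended.

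The contrapositive direction (a)$\Rightarrow$(b) is fine: when $\mu_{G^n}(X)=0$, each $Y_{1/k}$ is internal over $A$ and has measure at most $k\,\mu_{G^n}(X)=0$, so any $b_1$ with $\mu_{G^{n-1}}(X_{b_1})>0$ already fails density over $A$, and otherwise induction applies to the null fibre. Your derivation of the ``furthermore'' from the contrapositive is also correct.
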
	
	Note that if a tuple is in good position, then so is every subtuple. 
	\begin{definition}\label{D:Comp}
		Two internal subsets $X$ and $Y$ of $G^n$  are {\em 
		comparable} if the set $X\triangle Y$ has Loeb measure $0$.
	\end{definition}
	
	\begin{remark}\label{R:Comp} 
		By Lemma \ref{L:ExtensionDense}, two internal sets 
		$X$ and $Y$, both defined over $A$, are comparable if and only if 
		they contain the same dense elements over $A$.
		
		By the Fubini-Tonelli Theorem, two internal subsets $X$ and $Y$ of 
		$G^{m+n}$ are comparable if and only if for 
		$\mu_{G^n}$-almost all elements $b$ in $G^n$, the fibres $X_b$ 
		and $Y_b$ are comparable.
	\end{remark}
	
	\section{Stability and almost sure stability}\label{S:stab}
	
	Let $k\ge 1$ be a natural number. A \emph{half-graph of height 
	$k$ induced by} the relation $S \subseteq  X\times Y$ consists of a 
	sequence $(a_1,b_1,\ldots,a_k,b_k)$ with $a_i$ in $X$ 
	and $b_i$ in $Y$ for each $i=1,\dots, k$, such that the pair $(a_i,b_j)$ belongs to $S$ 
	if and only if $i\le j$. We denote by $\HH_k(S)$ the collection of all 
	half-graphs of height $k$ induced by $S$. Note that for each 
	sequence $(a_1,b_1,\ldots,a_k,b_k)$ in $\HH_k(S)$, the elements 
	$a_i$ must be pairwise distinct and similarly for the elements $b_i$.  
	
	\begin{definition}\label{D:stab}
		The relation $S \subseteq  X\times Y$ is \emph{$k$-stable} if it 
		induces no half-graph of height $k$, or equivalently, if 
		$\HH_k(S)=\emptyset$. A subset $A$ of a group $G$ is 
		\emph{$k$-stable} if the relation given by the Cayley graph of 
		$A$ in $G$ \[ \mathrm{Cay}(G,A)=\{ (g,h) \in G\times G  : \   
		g^{-1} \cdot h \in A\}\] is $k$-stable, or equivalently, if the 
		relation 
		\[ \Gamma(G,A)=\{ (g,h) \in G\times G  : \  h\inv \cdot g \in 
		A\}\] is $k$-stable.  
	\end{definition}
	
	\begin{example}\label{E:stable_examples}
		A non-empty subset $A$ of a group $G$ is $2$-stable if and only 
		if it is a coset of a subgroup of $G$.  Indeed, cosets 
  are clearly $2$-stable. To verify the other direction, it suffices to show that $a\cdot b\inv\cdot c$ belongs to $A$ for every $a$, $b$ and $c$ in $A$. Sidon subsets of abelian 
		groups, such as $2^{\mathbb N}$ in $\Z$, are easily seen to be 
		$3$-stable \cite[Lemma 1.3]{tS20}.
	\end{example}
	
	\begin{remark}\label{R:stab_ultrap}
		Consider a non-standard finite group $G=\prod_{n\to \Ult} G_n$ 
		and  an internal relation $S$ on $G\times G$ defined over a
		countable subset $A$.  By \L o\'s' Theorem (see Remark 
		\ref{R:internal} (e)), the relation $S$ is $k$-stable if and only if $S(G_n)$ is $k$-stable in $G_n$ for $\Ult$-almost all $n$ in $\N$. 
		Furthermore, a straightforward $\aleph_1$-saturation argument 
		yields that the internal relation $S$ is $k$-stable for some $k\ge 
		2$ if and only if there is no $A$-indiscernible sequence 
		$(a_i,b_i)_{i\in\N}$ such that the pair $(a_i,b_j)$ belongs to $S$ 
		if and only if $i\le j$. 
	\end{remark}
	
	The previous remark motivates the following definition.
	\begin{definition}\label{D:stab_infinite}
		Consider a non-standard finite group $G=\prod_{n\to \Ult} G_n$. 
		An $\omega$-internal relation  $S$  on $G\times G$ defined over 
		the countable subset $A$ is \emph{stable} if there is no infinite 
		$A$-indiscernible sequence $(a_i,b_i)_{i\in\N}$ such that the 
		pair $(a_i,b_j)$ belongs to $S$ if and only if $i\le j$.  
	\end{definition}
	Using the Krein-Milman theorem on the 
	locally compact Hausdorff topological real vector space of all 
	$\sigma$-additive probability measures, Hrushovski  \cite[Proposition 2.25]{eH12}
	proved the following result. Roughly speaking, it asserts that the 
	relation $R(a,b)$ defined by requiring the measure of the intersection of two associated internal sets 
	$X_a$ and $Y_b$ to exceed a certain threshold
 is stable. 
	For the
	purpose of this work, we will state an adapted version of
	\cite[Lemma 2.10]{eH12}, extracting it from the formulation in
	\cite[Fact 2.2 \& Corollary 2.3]{MPP20}.
	\begin{fact}\label{F:Measure_eq}
		Let $G$ be a non-standard finite group and let $\alpha$ be a real 
		number in $[0,1]$. 
		\begin{enumerate}[(a)]
			\item Given two internal subsets $X$ of $G^{n+r}$ and $Y$ of 
			$G^{n+s}$ in $\Fc$, the relation $R^\alpha_{X,Y}$ of 
			$G^r\times G^s$ defined by  \[ R^\alpha_{X,Y}(a,b) \ 
			\Leftrightarrow \ \mu_{G^n}(X_a \cap Y_b) \le \alpha \] is 
			stable. Notice that  $R^\alpha_{X,Y}$ is $\omega$-internal by 
			Definition \ref{D:Tame} (IV), and definable over the same parameters needed to define both $X$ and $Y$, by Remark \ref{R:def_par}. 
			\item If the  two  subsets $X$ of $G^{n+r}$ and $Y$ of 
			$G^{n+s}$ are $\omega$-internal and defined over $A$, the 
			relation  $R_{X,Y}$ of $G^r\times G^s$ defined by \[ 
			R_{X,Y}(a,b) \ \Leftrightarrow \ \text{ the $\omega$-internal 
			subset $X_a \cap Y_b$ of $G^n$  is not dense}\] is 
			\emph{equational}, that is, there is no  $A$-indiscernible sequence 
			$(a_i, b_i)_{i\in \N}$ such that $X_{a_0} \cap Y_{b_0}$ is dense yet $X_{a_0} \cap Y_{b_1}$ is not.
			
			\item Equationality implies the following (see \cite[Remark 2.1 
			\& Corollary 2.3]{MPP20}). For every $\omega$-internal subset 
			$X$ of $G^{n+r}$ defined over a countable rich subset $M$, if 
			the fibre $X_a$ is dense for some $a$ which is itself dense over $M$, then so 
			is the intersection $\bigcap_{i=0}^m X_{b_i}$ for every $m$ in 
			$\N$ and every tuple $(b_0, \ldots, b_m)$ of elements in 
			$\tp(a/M)$ in good position over $M$, as in Fact 
			\ref{F:UdiExercise} (a), that is, the element 
			$b_i$ is dense over $M\cup\{b_j:j<i\}$ for $i\le m$.  
		\end{enumerate}
	\end{fact}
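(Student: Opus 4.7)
The three parts can be tackled in sequence. Part (a) is the classical stability statement for a measure-threshold relation, while (b) is a weaker equationality statement for the null ideal, and (c) is an iteration of (b).

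For (a), I would proceed by contradiction. Suppose $R^\alpha_{X,Y}$ induces half-graphs of arbitrarily large height. By Ramsey combined with $\aleph_1$-saturation (Fact \ref{F:Saturation}), one extracts a countable $A$-indiscernible sequence $(a_i,b_i)_{i\in\N}$ with $\mu_{G^n}(X_{a_i}\cap Y_{b_j})\le\alpha$ iff $i\le j$. Indiscernibility forces the existence of two values $\beta_{\le}\le \alpha<\beta_{>}$ such that $\mu_{G^n}(X_{a_i}\cap Y_{b_j})=\beta_{\le}$ for $i\le j$ and $=\beta_{>}$ otherwise. To obtain a contradiction, I would apply a Keisler measure argument in the spirit of Hrushovski \cite[Proposition 2.25]{eH12}: within the weak-$\ast$ compact convex space of $\N$-invariant finitely additive measures on the indiscernible sequence, Krein--Milman yields an extreme (``ergodic'') point, and Fubini--Tonelli (Remark \ref{R:Fubini}) applied to the product of two such extreme measures forces the symmetric condition $\beta_{\le}=\beta_{>}$, contradicting our choices.

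For (b), I would suppose towards contradiction there is an $A$-indiscernible sequence $(a_i,b_i)_{i\in\N}$ with $X_{a_0}\cap Y_{b_1}$ not dense while $X_{a_0}\cap Y_{b_0}$ is dense. Setting $Z_i:=X_{a_i}\cap Y_{b_i}$, indiscernibility ensures each $Z_i$ is dense, while for any $i<j$ the intersection $Z_i\cap Z_j \subseteq X_{a_i}\cap Y_{b_j}$ is not dense. The desired contradiction follows from the S1 property of the null ideal: an indiscernible sequence of $\omega$-internal sets whose pairwise intersections lie in the null ideal must itself consist of elements of the null ideal, contradicting that $Z_0$ is dense. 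In the case that each $Z_i$ has positive Loeb measure this principle reduces to inclusion--exclusion, but its precise form in the $\omega$-internal setting (where dense sets may have Loeb measure zero, cf. Remark \ref{R:Dense}) rests on the S1 machinery extracted in \cite[Fact 2.2 \& Corollary 2.3]{MPP20}, which leverages Fubini--Tonelli together with a careful $\aleph_1$-saturation step.

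Part (c) is a straightforward induction on $m$. Assuming inductively that $W:=\bigcap_{i<m}X_{b_i}$ is dense, an Erd\H{o}s--Rado argument extends $b_m$ (which realises $\tp(a/M)$ and is dense over $Mb_0\cdots b_{m-1}$) to an $Mb_0\cdots b_{m-1}$-indiscernible sequence $(b_m^{(k)})_{k\in\N}$ of realisations of the same type. Equationality of the relation ``$X_c\cap W$ is not dense'' from (b), applied to this indiscernible sequence, forbids $X_{b_m}\cap W$ from being non-dense, completing the inductive step. The main obstacle throughout is the establishment of S1 for the null ideal in the Loeb setting, which is the delicate measure-theoretic heart of the equationality argument and is the reason (b) requires genuinely different input from the Krein--Milman argument in (a); fortunately, this has been isolated as a black box in the cited references.
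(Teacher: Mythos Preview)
The paper does not supply its own proof of this Fact; it is stated with references to \cite[Proposition~2.25]{eH12} for (a), to the S1-property of the null ideal for (b), and to \cite[Remark~2.1 \& Corollary~2.3]{MPP20} for (c). Your sketches for (a) and (b) accurately capture those arguments: the Krein--Milman route for the general threshold relation and the S1 argument for the null ideal are exactly what the paper's preamble to the Fact points to.

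Your inductive step in (c), however, does not go through as written. You propose to extend $b_m$ to an $M\cup\{b_0,\dots,b_{m-1}\}$-indiscernible sequence $(b_m^{(k)})_k$ and then invoke equationality of the relation ``$W\cap X_c$ is not dense''. But equationality, in the form given in (b), only tells you that \emph{if} $W\cap X_{b_m^{(0)}}$ is dense then $W\cap X_{b_m^{(1)}}$ is dense; it gives you nothing when $W\cap X_{b_m^{(0)}}$ is precisely the set whose density is in question. There is no starting point from which to propagate. The indiscernible sequence you build consists of copies of $b_m$ over the \emph{fixed} parameters $b_0,\dots,b_{m-1}$, so neither equationality nor S1 applied to the sets $W\cap X_{b_m^{(k)}}$ (or to $X_{b_m^{(k)}}$ alone) sees $W$ as anything other than a constant, and you never recover information about $W\cap X_{b_m}$.

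The argument in \cite{MPP20} proceeds differently: one works with the \emph{entire} good-position sequence $(b_0,\dots,b_m)$ at once rather than freezing $b_0,\dots,b_{m-1}$ as parameters. Concretely, equationality in the sense of (b) applied to the relation $R(c,d)\Leftrightarrow X_c\cap X_d$ not dense (both variables moving) yields, for any $M$-indiscernible sequence $(e_i)$ with $X_{e_0}$ dense, that $X_{e_0}\cap X_{e_1}$ is dense; iterating with the shifted sequence $(e_1,e_2,\dots)$, which is indiscernible over $M\cup\{e_0\}$, gives $\bigcap_{i\le m} X_{e_i}$ dense. The remaining work in \cite[Remark~2.1]{MPP20} is to show that a good-position sequence of realisations of a single dense type can be treated as (an initial segment of) such an $M$-indiscernible sequence for the purposes of this argument --- this is where richness of $M$ and the measure-theoretic characterisation of density enter. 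Your sketch conflates this step with a routine Erd\H{o}s--Rado extraction, which loses the link between the extracted indiscernible sequence and the original $b_i$'s.
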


While the notion of stability has proved highly profitable in model theory, from a more combinatorial perspective it is open to the criticism that whether or not a relation is stable is very sensitive to minor 
	perturbations. In particular, modifications by an internal set of 
	measure $0$ can 
	destroy stability, as the following example illustrates. 
	\begin{example}\label{E:destroy__stab}
		For some non-principal ultrafilter $\Ult$, consider the 
		non-standard finite group $G=\prod_{n\to \Ult} \Z/n^2\Z$  as 
		well as the internal set $X=\prod_{n\to \Ult} \{\overline 
		0,\ldots, \overline{n-1}\}$ and the internal relation $S$ on 
		$X\times X$ arising from the standard linear order of $\Z$ 
		restricted to the set of representatives $\{0,\ldots, n-1\}$. By \L 
		o\'s's theorem (see Remark \ref{R:internal} (e)), the collection 
		$\HH_k(S)$ of induced half-graphs of height $k$ is non-empty for 
		every natural number $k\ge 1$, so the internal relation $S$ is never 
		stable. However, for every index $k\ge 1$, the internal set 
		$\HH_k(S)$ has $\mu_{G^{2k}}$-measure $0$, so it is 
		comparable to the empty relation, which in turn is $k$-stable.
	\end{example}
	Motivated by the above example, we introduce a weakening of 
	stability which is combinatorially robust with respect to 
	perturbations by sets of measure $0$.
	
	\begin{definition}\label{D:robust}
		Consider a non-standard finite group $G$ as well as two internal 
		subsets $X$ of $G^{n}$ and $Y$ of $G^{m}$. Given a natural number $k\geq 1$, an internal 
		relation $S \subseteq  X\times Y$ is \emph{almost surely $k$-stable} 
 if $\mu_{G^{(n+m)k}}(\HH_k(S))=0$, 
		that is,  the collection of all half-graphs of height $k$ induced by $S$ is 
		negligible in $G^{(n+m)k}$ with respect to the non-standard 
		counting measure.  
	\end{definition}
	
	\begin{remark}\label{R:robust}~
		\begin{enumerate}[(a)]
			\item 	It is immediate that almost sure stability is preserved 
			by 
			finite 
			Boolean combinations as well as by permutation of the 
			variables: 
			if $S\subseteq X\times Y$ is almost surely $k$-stable, then so is the 
			inverse relation \[S^{opp}=\{(y,x)\in Y\times X  : \  (x,y) \in 
			S\}.\]
			
			\item 	Fact \ref{F:UdiExercise} yields that the internal 
			relation 
			$S$ defined over the countable subset $A$ is not almost surely
			$k$-stable if and only if it induces a half-graph of height $k$ 
			witnessed by a dense tuple $(a_1, b_1, \ldots, a_k, b_k)$ in 
			good 
			position over $A$. 
			
			As a consequence, whenever a relation $S$ is comparable to an 
			internal $k$-stable or even an almost surely $k$-stable relation $S'$, 
			Remark \ref{R:Comp} yields 
			that $S$ is  almost surely $k$-stable. 
		\end{enumerate}
	\end{remark}
	
	We will see in Proposition \ref{P:robuststable_boxes} below that every 
	almost surely stable relation $S\subseteq X\times Y$ can be 
	approximated for every $\epsilon>0$ by a finite union of  internal 
	boxes of the form $X'\times Y'$. Hence, the almost surely $k$-stable 
	relation is $\epsilon$-comparable to a stable relation $S'$, although 
	our methods do not allow us to compute the degree of stability of 
	$S'$. 
	
	 In fact,  the (bipartite version of the) induced removal lemma  
	\cite{EFR86} implies that every almost surely $k$-stable must be 
	comparable to a $k$-stable relation, but we do not know how to give 
	a model-theoretic proof of this result. So we ask the following: 
	
	\begin{question}
		Is there a model-theoretic account that every almost surely $k$-stable 
		internal relation is comparable (or $\epsilon$-comparable) to an internal 
		$k$-stable relation?
	\end{question}
A straightforward application of Lemma 
\ref{L:Transfer} (with $r=0$) yields an analogue of almost sure stability for families 
of 
finite groups  
in terms of  their asymptotic behaviour: 
\begin{remark}\label{R:robust_asympt}
Consider a family $(G_\ell, X_\ell, Y_\ell, S_\ell)_{\ell\in\N}$, 
 where for each $\ell\in \N$, $G_\ell$ is a finite group,
$X_\ell\subseteq G_\ell^n$ and $Y_\ell\subseteq G_\ell^m$ for some $n,m\in \N$, and $S_\ell\subseteq X_\ell\times 
Y_\ell$.  The following are equivalent for 
every natural number $k\ge 1$.
\begin{enumerate}[(a)]
	\item For every  $\theta>0$ there is some 
	$\ell_0=\ell_0(k,\theta)$ in 
	$\N$ such that $|\HH_k(S_\ell)| \le \theta |G_\ell|^{k(n+m)}$ for all 
	$\ell\ge \ell_0$, where $\HH_k(S_\ell)$ is the collection of all 
	half-graphs 
	of height $k$ induced by $S_\ell$ on the finite group $G_\ell$.
	\item For every non-principal ultrafilter $\Ult$ on $\N$, the 
	internal relation $S=\prod_{n\to\Ult} S_\ell$ is almost surely $k$-stable.
\end{enumerate}
If  (a) holds, we say that  $(G_\ell, X_\ell, Y_\ell, S_\ell)_{\ell\in\N}$ 
is an \emph{almost surely $k$-stable family}. 
\end{remark}

	A key feature of stable relations is that they are \emph{stationary} \cite[Lemma 2.3]{eH12}, in the sense that their 
	truth value is constant along
	the set of pairs of realisations which are \emph{non-forking independent}.
	Non-forking independence is a fundamental notion in model 
	theory, originally due to Shelah, defined in combinatorial terms for 
	any structure \cite[Chapter III.1, Definition 1.4]{sSbook}. We
	will not need to introduce non-forking independence in this paper. Instead, inspired by classical results in model theory, we will show that the truth value of an almost surely
	stable relation remains constant along the set of pairs of realisations for 
	which one coordinate is dense over the other. 
	
	Before stating the corresponding result, we introduce some 
	notation. 
	\begin{definition}\label{D:goodpos}
		Fix two types $\tp(a/M)$ and $\tp(b/M)$ over a countable rich 
		subset $M$. We denote by $\GP(\tp(a/M),\tp(b/M))$  the set of 
		all 
		pairs $(a',b')$ with $a'$ 
		in $\tp(a/M)$ and $b'$ in $\tp(b/M)$ such that $(a',b')$ or 
		$(b',a')$ is in good position over $M$ (see Fact 
		\ref{F:UdiExercise}).  
	\end{definition}
	Note that the set $\GP(\tp(a/M),\tp(b/M))$ is empty exactly if 
	one of the types $\tp(a/M)$ or $\tp(b/M)$ is not dense, by Lemma 
	\ref{L:ExtensionDense}. Moreover:
	\[
	(a',b')\in \GP(\tp(a/M),\tp(b/M)) \ \Leftrightarrow \ (b',a')\in 
	\GP(\tp(b/M),\tp(a/M)).
	\]
	
	\begin{theorem}\label{T:robuststable_main}
		Consider a non-standard finite group $G$ as well as two internal 
		subsets $X$ of $G^{n}$ and $Y$ of $G^{m}$ and an internal 
		relation $S$ on $X\times Y$,  all defined over a countable rich 
		subset $M$. 
		
		If  $S$ is almost surely $k$-stable for some $k\ge 1$, then for every two (dense)
		types $\tp(a/M)$ and $\tp(b/M)$, the set 
		$\GP(\tp(a/M),\tp(b/M))$ is either 
contained in $S$ or disjoint from $S$. 
	\end{theorem}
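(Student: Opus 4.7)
The plan is a proof by contradiction. Write $p = \tp(a/M)$ and $q = \tp(b/M)$, and suppose there exist $(a_0, b_0) \in \GP(p,q) \cap S$ and $(a_1, b_1) \in \GP(p,q) \setminus S$. I aim to construct a tuple $(a'_1, b'_1, \ldots, a'_k, b'_k) \in \HH_k(S)$ which is in good position over $M$; by Fact~\ref{F:UdiExercise}, such a tuple is dense in $G^{k(n+m)}$, contradicting $\mu_{G^{k(n+m)}}(\HH_k(S)) = 0$.

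The first step is to arrange the two given witnesses in an ``opposite-ordering'' configuration: $(a_0, b_0)$ in good position with $a_0$ first (so $b_0$ is dense over $Ma_0$), and $(a_1, b_1)$ in good position with $b_1$ first (so $a_1$ is dense over $Mb_1$). The relation $S^{opp}$ is also robustly $k$-stable by Remark~\ref{R:robust}(a), so swapping $S$ with $S^{opp}$ corresponds to reversing the roles of the two coordinates. When both witnesses share a parallel ordering, one applies Lemma~\ref{L:ExtensionDense} to the dense type $p$ to extract candidates $\tilde a \in p$ dense over $Mb_1$ and $\tilde{\tilde a} \in p$ dense over $Mb_0$, then performs a short sub-case analysis on the $S$-status of $(\tilde a, b_1)$ and $(\tilde{\tilde a}, b_0)$; each sub-case either directly produces a pair in the opposite ordering with the complementary $S$-status, or reduces to the opposite-ordering configuration for $S^{opp}$.

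With the two witnesses in opposite orderings, I apply Fact~\ref{F:Measure_eq}(c) twice. Consider the $\omega$-internal set $Z = \{(c, d) \in G^{n+m} : d \in q,\ (c, d) \in S\}$: its fibre over $a_0$ contains $b_0$ dense over $Ma_0$ and hence is dense. Fact~\ref{F:Measure_eq}(c) then yields that for every tuple $(c_1, \ldots, c_r)$ in good position over $M$ with each $c_i \in p$, the intersection $q \cap \bigcap_i S(c_i, -) = \bigcap_i Z_{c_i}$ is dense, where $S(c,-) = \{y : (c,y) \in S\}$. Symmetrically, the $\omega$-internal set $W = \{(d, c) \in G^{m+n} : c \in p,\ (c, d) \notin S\}$ has dense fibre over $b_1$ (witnessed by $a_1$ being dense over $Mb_1$), so for every good-position tuple $(d_1, \ldots, d_r)$ in $q$ over $M$, the intersection $p \cap \bigcap_j N(-, d_j)$ is dense, where $N = (X \times Y) \setminus S$ and $N(-,d) = \{x : (x,d) \notin S\}$. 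Now build $(a'_1, b'_1, \ldots, a'_k, b'_k)$ recursively: at step $i$, choose $a'_i \in p$ dense over $M \cup \{a'_j, b'_j : j < i\}$ satisfying $(a'_i, b'_j) \notin S$ for each $j<i$, which is possible by Lemma~\ref{L:ExtensionDense} because $(b'_1, \ldots, b'_{i-1})$ is a good-position subtuple of $q$, so $p \cap \bigcap_{j<i} N(-, b'_j)$ is dense; then choose $b'_i \in q$ dense over everything constructed so far, including $a'_i$, satisfying $(a'_{i'}, b'_i) \in S$ for each $i' \le i$, available because $(a'_1, \ldots, a'_i)$ is in good position, making $q \cap \bigcap_{i' \le i} S(a'_{i'}, -)$ dense. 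After $k$ rounds the tuple lies in $\HH_k(S)$ and is in good position over $M$, hence dense by Fact~\ref{F:UdiExercise}, yielding the desired contradiction. The principal obstacle is the Step~1 reduction: the two applications of Fact~\ref{F:Measure_eq}(c) genuinely require the positive and negative witnesses to be in opposite orderings, so the parallel-ordering cases cannot be bypassed and must be unpacked via the branching sub-case argument above.
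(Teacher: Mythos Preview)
Your proof is correct and follows essentially the same approach as the paper: obtain a positive witness in one ordering and a negative witness in the opposite ordering, then use Fact~\ref{F:Measure_eq}(c) applied to the $\omega$-internal sets $Z$ and $W$ to recursively build a half-graph of height $k$ in good position over $M$, contradicting robust $k$-stability via Fact~\ref{F:UdiExercise}. The only organisational difference is that the paper packages your Step~1 as an intermediate Claim (all reverse-ordered pairs lie in $S$ once a forward positive witness exists) and then applies that Claim to $S^{opp}$, which sidesteps your explicit sub-case branching; your route is equally valid.
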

	
	\begin{proof}
		If $a$ does not lie in $X$, neither does any element of 
		$\tp(a/M)$, and likewise if $b$ does not lie in $Y$. In this case 
		$X\times Y$ (and thus $S$) is clearly disjoint from 
		$\GP(\tp(a/M),\tp(b/M))$. So we 
		may assume that $\GP(\tp(a/M),\tp(b/M))$ is a subset of 
		$X\times Y$. 
		
		Suppose now that $S$ is not  disjoint from   
		$\GP(\tp(a/M),\tp(b/M))$.  Up to relabelling, we may assume by 
		Remark \ref{R:robust}(a) that $(a,b)$ lies in $S$, with $(a, b)$ 
		in 
		good position, that is, the element $b$ is dense over 
		$M\cup\{a\}$ (keeping in mind that $a$ is already dense over 
		$M$).
		
		\begin{claim*}
			All pairs $(c, d)$ in $\GP(\tp(a/M),\tp(b/M))$ with $(d, c)$ 
			in good position over $M$ are contained in $S$. 
		\end{claim*}		
		\begin{claimproof*}
			Suppose to the contrary that there is a pair $(c, d)$ with $c$ dense 
			over 
			$M\cup\{d\}$ not contained in $S$. By Remark \ref{R:aleph_1} 
			(using the fact that $d$ belongs to the type $\tp(b/M)$),  there is
			a pair 
			$(a',b)$ in $\tp(c, d/M)$. So the original tuple $(a, b)$ belongs to $S$ but  $(a', b)$ does 
			not lie 
			in $S$ (for $S$ is definable over $M$). 
			In particular, we have two pairs $(a, b)$ and $(a', b)$ satisfying
			the following:
			\begin{itemize}
				\item the type $\tp(a/M)=\tp(a'/M)$ is dense;
				\item the element $b$ is dense over $M \cup\{a\}$ and  
				$a'$ is dense 
				over $M \cup\{b\}$ (by Lemma \ref{L:Type_dense_coord});
				\item the pair $(a, b)$ belongs to $S$ but $(a', b)$ does not.
			\end{itemize}
			We will now construct inductively, for every $n$ in $\N$, a 
			dense 
			sequence $(a_0, b_0, \ldots, a_n, b_n)_{0\le i\le n}$ in good 
			position 
			over $M$ witnessing that $S$ induces a half-graph of height 
			$n$ 
			such that  $a_i$ belongs to $\tp(a/M)$ and $b_i$ to 
			$\tp(b/M)$ for 
			each $i\le n$. In particular, the case $n=k$ contradicts the almost sure 
			stability of $S$ by Remark \ref{R:robust} (b). 
			
			For $n=0$, set $a_0=a$ and $b_0=b$, so $(a_0, b_0)$ is in good 
			position over $M$ and lies in $S$, as desired. Suppose now that 
			the
			sequence $(a_0, b_0, \ldots, a_r, b_r)$ has already been 
			constructed 
			for some $r< n$. Consider the $\omega$-internal set \[Z =\{ 
			(x,y) \in 
			X\times Y  : \  x\in \tp(a/M) \ \text{and} \ (x,y) \notin S \},\] 
			which is 
			defined 
			over $M$. Clearly $(a', b)$ belongs to $Z$, so the fibre 
			$Z_b\subseteq 
			X$ is dense. 
			By Fact \ref{F:Measure_eq} (c), so is the intersection $ 
			Z_{b_0}\cap\cdots \cap Z_{b_r}$, since the subtuple 
			$(b_0,\ldots, 
			b_r)$ is also in good position over $M$ and all  coordinates lie 
			in 
			$\tp(b/M)$. Hence, there is an element $a_{r+1}$ in the above 
			intersection which is dense over $M\cup\{a_i, b_i\}_{0\le i\le 
			r}$. In 
			particular, the element $a_{r+1}$ lies in $\tp(a/M)$ but 
			$(a_{r+1}, 
			b_i)$ does not lie in $S$ for $i\le r$. 
			
			Similarly, the $\omega$-internal set \[W =\{ (x,y) \in 
			X\times Y : \  y\in \tp(b/M) \ \text{and}\ (x,y) \in S \}\]  is 
			also defined 
			over $M$ and the fibre $W_a\subseteq Y$ is dense, since $(a, 
			b)$ lies 
			in 
			$W$. Thus, there is an element $b_{r+1}$ in $\tp(b/M)$ 
			contained in 
			$W_{a_0}\cap\cdots\cap W_{a_{r+1}}$ dense over $M\cup 
			\{a_i, 
			b_i\}_{0\le i\le r} \cup \{a_{r+1}\}$, which yields that the 
			tuple \[ 
			(a_0, b_0, \ldots, a_{r+1}, b_{r+1})\] is in good position over 
			$M$ by Fact 
			\ref{F:UdiExercise} and satisfies the desired properties. 
		\end{claimproof*}
		
		By the previous claim, choose any pair $(a',b')$ in $S\cap 
		\GP(\tp(a/M), \tp(b/M))$ such that $a'$ is dense over 
		$M\cup\{b'\}$. The pair $(b',a')$ is in good position over $M$ 
		and it belongs to the inverse relation $S^{\rm opp}$, which is 
		again almost surely $k$-stable, by Remark \ref{R:robust} (a). 
		We conclude by the Claim (applied to $S^{\rm opp}$ inverting 
		the roles 
		of $X$ and $Y$) that all pairs $(b'', a'')$ in $\GP(\tp(b/M), 
		\tp(a/M))$ 
		with $b''$ dense over $M\cup\{a''\}$ must be contained 
		in $S^{\rm opp}$. Hence, all pairs $(a'', b'')$ in $\GP(\tp(a/M), 
		\tp(b/M))$ in good position over $M$ belong to $S$, so the set 
		$\GP(\tp(a/M), \tp(b/M))$  is fully contained in $S$, as desired. 
	\end{proof}
	
	\begin{cor}\label{C:keylemma}
		Consider a non-standard finite group $G$ as well as two internal 
		subsets $X$ of $G^{n}$ and $Y$ of $G^{m}$ and an internal 
		relation $S$ on $X\times Y$,  all defined over a countable rich 
		subset $M$.  If  $S$ is almost surely $k$-stable for some $k\ge 1$, 
		then the fibres $S_a$ and $S_{a'}$ are comparable 
		whenever $a'$ belongs to the dense type $\tp(a/M)$.
	\end{cor}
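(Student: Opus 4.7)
The plan is to deduce the corollary almost immediately from Theorem \ref{T:robuststable_main}, using the characterisation of comparability given in Remark \ref{R:Comp}. The fibres $S_a$ and $S_{a'}$ are internal subsets of $Y$ defined over $M\cup\{a,a'\}$, so to show they are comparable it suffices, by Remark \ref{R:Comp} applied over the parameter set $M\cup\{a,a'\}$, to prove that they contain the same elements dense over $M\cup\{a,a'\}$.

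So I would pick an arbitrary $b$ dense over $M\cup\{a,a'\}$ and show the equivalence $(a,b)\in S\Leftrightarrow (a',b)\in S$. Since density over a larger set implies density over any subset, $b$ is dense over $M\cup\{a\}$ and over $M\cup\{a'\}$, while $a$ and $a'$ are dense over $M$ by hypothesis (the type $\tp(a/M)=\tp(a'/M)$ is dense). Therefore both $(a,b)$ and $(a',b)$ are pairs in good position over $M$ in the sense of Fact \ref{F:UdiExercise}, and both belong to the set $\GP(\tp(a/M),\tp(b/M))$ appearing in Definition \ref{D:goodpos}, noting that $\tp(b/M)$ is dense because $b$ is dense over $M$.

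At this point Theorem \ref{T:robuststable_main} does all the work: since $S$ is robustly $k$-stable, the set $\GP(\tp(a/M),\tp(b/M))$ is either entirely contained in $S$ or entirely disjoint from $S$. In either case $(a,b)$ and $(a',b)$ agree on membership to $S$, so $b\in S_a$ iff $b\in S_{a'}$, which is exactly what is needed.

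I do not anticipate a real obstacle here; the only point that requires a moment's care is to make sure the densities line up correctly so that both pairs qualify as elements of $\GP(\tp(a/M),\tp(b/M))$, and that $\tp(b/M)$ is itself dense so that this set is genuinely the common good-position set for these two types. Once this bookkeeping is in place, the corollary follows from the stationarity statement of Theorem \ref{T:robuststable_main} together with the measure-theoretic reformulation of comparability in Remark \ref{R:Comp}.
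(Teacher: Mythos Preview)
Your proposal is correct and follows essentially the same argument as the paper: both proofs pick an element $b$ dense over $M\cup\{a,a'\}$, observe that $(a,b)$ and $(a',b)$ both lie in $\GP(\tp(a/M),\tp(b/M))$, and invoke Theorem \ref{T:robuststable_main} to conclude they agree on membership in $S$. The only cosmetic difference is that the paper phrases this as a contradiction (assuming $S_a\setminus S_{a'}$ is dense and deriving a violation of stationarity), whereas you phrase it positively via Remark \ref{R:Comp}.
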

	\begin{proof}
		If the set-theoretic difference $S_a \setminus S_{a'}$ were 
		dense for some choice $a$ and $a'$, both in the same dense type 
		over $M$, then it would contain some element $b$ which is dense over 
		$M\cup\{a, a '\}$.  In particular, both $(a, b)$ and $(a', b)$ would lie in 
		$\GP(\tp(a/M), \tp(b/M))$, yet $(a,b)$ belongs to $S$ but $(a', 
		b)$ does not, which contradicts Theorem 
		\ref{T:robuststable_main}. 
	\end{proof}
	
	\begin{prop}\label{P:robuststable_boxes}
		Consider a non-standard finite group $G$ as well as two internal 
		subsets $X$ of $G^{n}$ and $Y$ of $G^{m}$ and an internal 
		relation $S$ on $X\times Y$,  all defined over a countable rich 
		subset $M$.  If  $S$ is almost surely $k$-stable for some $k\ge 1$, 
		then for every $\epsilon >0$ there is a finite (possibly empty) 
		union 
		of boxes \[ U=\bigcup\limits_{i=1}^\ell  (X_i\times Y_i),\] with all 
		$X_i\times Y_i \subseteq X\times Y$,  
		defined over $M$, such that $\mu(U\triangle S) < \epsilon$.
	\end{prop}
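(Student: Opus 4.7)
The plan is to approximate $S$ by a finite union of boxes via a two-step strategy: first cluster the essential fibres of $S$ into finitely many classes in the $L^1$-metric on $Y$, then convert these clusters into an $M$-internal box decomposition using definability of types for the stable auxiliary relations supplied by Fact \ref{F:Measure_eq}(a).

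The key finite clustering claim is the following: for every $\delta > 0$, there exist $a_1, \ldots, a_\ell \in X$, each dense over $M$, such that every $a \in X$ dense over $M$ satisfies $\mu_Y(S_a \triangle S_{a_i}) < \delta$ for some $i$. I prove this by contradiction. If it fails for some $\delta > 0$, iteratively pick dense $a_1, a_2, \ldots \in X$ with $\mu_Y(S_{a_n} \triangle S_{a_m}) \geq \delta$ for all $n \neq m$. The standard Ramsey plus $\aleph_1$-saturation procedure from the preliminaries yields an $M$-indiscernible sequence whose pair types inherit the lower bound $\mu_Y(S_x \triangle S_y) \geq \delta$. Since all its members share a common dense $M$-type, Corollary \ref{C:keylemma} forces the pairwise symmetric differences to have Loeb measure zero, a contradiction.

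Applying the clustering claim with $\delta = \epsilon/3$, define $X_i^\circ = \{a \in X : \mu_Y(S_a \triangle S_{a_i}) < \epsilon/3\}$, which is internal over $M \cup \{a_1, \ldots, a_\ell\}$ by Definition \ref{D:Tame}(IV). Disjointify these to obtain internal sets $X_1, \ldots, X_\ell$ whose union covers $X$ up to Loeb measure zero (since non-dense elements form a Loeb null set). Combined with $Y_i = S_{a_i}$, Fubini--Tonelli yields $\mu((\bigcup_i X_i \times Y_i) \triangle S) < \epsilon/3$.

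The main obstacle is that this preliminary decomposition is defined over $M \cup \{a_1, \ldots, a_\ell\}$ rather than over $M$. To remedy this, I invoke Fact \ref{F:Measure_eq}(a): the binary relation $\mu_Y(S_x \cap S_y) \leq q$ on $X \times X$ is stable, so the classical definability of types for stable relations provides, for each $i$, an $M$-internal set $\bar{X}_i$ that is Loeb-equivalent to $X_i$. Similarly, each fibre $S_{a_i}$ is type-invariant on $Y$ by Corollary \ref{C:keylemma}, hence measurable with respect to the Loeb $\sigma$-algebra generated by $M$-internal subsets of $Y$, and therefore approximable to arbitrary Loeb precision by an $M$-internal set $Y_i^*$. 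Setting $U = \bigcup_i \bar{X}_i \times Y_i^*$ and bounding the cumulative error via the triangle inequality for Loeb measure yields $\mu(U \triangle S) < \epsilon$, as required.
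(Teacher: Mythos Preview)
Your clustering argument via Corollary \ref{C:keylemma} is fine, and the preliminary box decomposition over $M\cup\{a_1,\ldots,a_\ell\}$ is correct. The gap is entirely in Step~3, the descent to $M$, and it is a real one.

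First, the appeal to ``classical definability of types for stable relations'' does not deliver what you claim. Definability of $\tp(a_i/M)$ with respect to a stable formula $\phi(x,y)$ produces an $M$-formula $d\phi(x)$ such that $\phi(a,a_i)\leftrightarrow d\phi(a)$ holds for $a\in M$; it says nothing about realisations $a\in G\setminus M$, which is exactly where the measure lives. Moreover, the relations $\mu_{G^m}(S_x\cap S_y)\le q$ from Fact~\ref{F:Measure_eq}(a) are $\omega$-internal rather than single formulas, so at best you would obtain an $\omega$-internal $M$-definition, not an $M$-internal set. There is no direct route from stability of $R^q$ to ``$\{a:R^q(a,a_i)\}$ is Loeb-equivalent to an $M$-internal set''.

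Second, your justification for the $Y$-side is miscited and incomplete. Corollary~\ref{C:keylemma} compares fibres $S_a,S_{a'}$ for $a,a'$ in the same dense $M$-type; it does not say that $S_{a_i}$ is a union of $M$-types on $Y$. For that you need Theorem~\ref{T:robuststable_main}. And even granting that $S_{a_i}$ is, up to a null set, a union of dense $M$-types, this does not immediately place it in the $\sigma$-algebra generated by $M$-internal sets: an uncountable union of types need not be Borel in the Stone space. A correct argument runs as follows: for each dense type $q\subseteq Y$ with $\GP(\tp(a_i/M),q)\subseteq S$, Theorem~\ref{T:robuststable_main} gives that $q\setminus S_{a_i}$ is not dense, and $\aleph_1$-saturation then yields an $M$-internal $W\supseteq q$ with $\mu_{G^m}(W\setminus S_{a_i})=0$. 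Since there are only countably many $M$-internal sets, $S_{a_i}$ agrees up to measure zero with a countable union of such $W$'s, and only then does standard measure theory give an $M$-internal approximant. The same device (via Corollary~\ref{C:keylemma}, which makes $a\mapsto\mu_{G^m}(S_a\triangle S_{a_i})$ constant on dense $M$-types) handles $X_i$.

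The paper avoids this descent altogether: it constructs boxes over $M$ from the outset, by first isolating elements $a$ with a certain property $(\star)$, then using Theorem~\ref{T:robuststable_main} and saturation to find an $M$-internal $Y'\supseteq\tp(b/M)$ with $\mu_{G^m}(Y'\setminus S_a)=0$, and setting $X'=\{x:\mu_{G^m}(Y'\setminus S_x)=0\}$, which is $\omega$-internal over $M$ by construction. Your strategy can be repaired along the lines above, but as written the descent step does not go through.
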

	Note that every box is $2$-stable and thus the finite union $U$ is 
	$r$-stable for some $r$ which can be explicitly computed in 
	terms of the number $\ell$ of boxes occurring in $U$. However, at the time of writing we 
	are unable to give an explicit bound on $\ell$ 
	or show that $U$ and $S$ are actually comparable. 
	\begin{proof}
		If $S$ is not dense in $G^{n+m}$, then it suffices to take $U$ to be the empty union of boxes. Thus, we may assume that the 
		internal subset 
		$S$ of $G^{n+m}$ is dense. 
		
		\begin{claim}\label{Claim:family}
			Consider an arbitrary subset $Z$ of $S$ in the $\sigma$-algebra of 
			internal sets defined over $M$. (Note that $Z$ need not be 
			$\omega$-internal, but it is measurable with respect to the 
			extension of the Loeb measure.) If $Z$ has positive measure, 
			then there exists a dense element $a$ in $G^n$ such that the fibre $Z_a$ is dense and satisfies the following property ($\star$): for every internal 
			subset $W$ of $Y$ defined over $M$, if 
			$\mu_{G^m}(W\setminus  
			S_a)=0$, 
			then \[ \mu_{G^n}\big( \{x\in G^n  : \  
			\mu_{G^m}(W\setminus S_x)=0\} \big) > 0.\]  
		\end{claim}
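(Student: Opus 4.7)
The plan is to combine the Fubini--Tonelli theorem (Remark~\ref{R:Fubini}) with the fact that there are only countably many internal sets defined over the countable parameter set $M$. Since property~$(\star)$ must hold for \emph{every} internal $W\subseteq Y$ over $M$, the natural strategy is to find $a$ by excluding a single negligible set for each such $W$ and then intersecting with a set of candidates of positive measure.

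First, I would produce enough candidate $a$'s. Applying Fubini--Tonelli to $Z$, the measurable set
\[
A_0=\left\{a\in G^n \ | \ \mu_{G^m}(Z_a)>0\right\}
\]
has positive $\mu_{G^n}$-measure, because $\mu_{G^{n+m}}(Z)>0$. Moreover, since both the tame family $\Fc$ and the parameter set $M$ are countable, $\Fc(M)$ is countable, so the union of all internal subsets of $G^n$ in $\Fc(M)$ of measure zero still has measure zero. Hence the set $D$ of elements of $G^n$ dense over $M$ has full $\mu_{G^n}$-measure, and so $A_0\cap D$ has positive measure as well.

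Next, for each internal subset $W$ of $Y$ defined over $M$ (again a countable collection), I would consider the internal set $T_W=\{(x,y)\in G^n\times Y \ | \ y\in W,\ (x,y)\notin S\}$, whose fibre over $x$ equals $W\setminus S_x$. By Definition~\ref{D:Tame}~(IV) (see Remark~\ref{R:fibreDef}), the set
\[
C_W=\left\{x\in G^n \ | \ \mu_{G^m}(W\setminus S_x)=0\right\}
\]
is $\omega$-internal over $M$ and in particular measurable. Call $W$ \emph{bad} if $\mu_{G^n}(C_W)=0$. Since there are only countably many $W$ to consider, the union $B=\bigcup_{W\text{ bad}}C_W$ has $\mu_{G^n}$-measure zero.

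Finally, pick any $a\in(A_0\cap D)\setminus B$, which exists because $A_0\cap D$ has positive measure while $B$ has measure zero. Such an $a$ is dense over $M$ and $Z_a$ has positive measure, hence is dense in the required sense. For any internal $W\subseteq Y$ over $M$ with $\mu_{G^m}(W\setminus S_a)=0$, we have $a\in C_W$; since $a\notin B$, this $W$ cannot be bad, i.e.\ $\mu_{G^n}(C_W)>0$, which is precisely the content of property~$(\star)$. There is no substantial obstacle here: the only delicate points are the measurability of $C_W$ and the countability of the collection of relevant $W$'s, both of which are automatic in the tame-family framework.
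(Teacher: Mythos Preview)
Your argument is correct and follows essentially the same route as the paper's own proof: both apply Fubini--Tonelli to obtain the positive-measure set of $a$'s with $\mu_{G^m}(Z_a)>0$, then subtract off the countable union of null internal $M$-definable subsets of $G^n$ (to ensure density of $a$) together with the countable union of null sets $C_W$ (your ``bad'' $W$'s), and pick $a$ in what remains. The notation differs, but the structure is identical.
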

		\begin{claimproof}
			
			By Fubini-Tonelli (Remark \ref{R:Fubini}), the measurable set 
			\[Z_1=\{ x\in G^n  : \  \mu_{G^m}(Z_x)>0\}\] has positive 
			measure (since $\mu_{G^{n+m}} (Z)$ is not zero). For every 
			internal subset $W$ of $Y$ defined over $M$, set \[ D_W=\{ 
			x\in G^n 
			 : \  \mu_{G^m}(W \setminus S_x)=0 \}.\]  Notice that there 
			are only 
			countably many such sets $D_W$. Consider the 
			subfamily 
			$\mathcal D$ of all sets $D_W$ of measure $0$. 
			Similarly, let $\mathcal E$ be the countable family of all 
			internal subsets $X'$ of $G^n$ defined over $M$ of measure 
			$0$. By $\sigma$-additivity of the (extension of the) Loeb 
			measure, the set 
			\[
			Z_1 \setminus \Big( \bigcup_{D_W\in\mathcal D} D_W \ \cup \ 
			\bigcup_{X'\in \mathcal E} X' \Big) 
			\]  has positive measure, so it is non-empty. Now choose some 
			element $a$ in the above set-theoretic difference. By 
			the choice of $\mathcal E$, the type $\tp(a/M)$ is dense and so is 
			the fibre $Z_a$ (since $a$ lies in $Z_1$).  Furthermore, the 
			element $a$ satisfies ($\star$)
			by our choice of the family $\mathcal D$. 
		\end{claimproof}
	As the internal set $S$ is dense, it has positive measure. By the previous 
	claim, there is an element $a$ which is dense over $M$  satisfying 
	$(\star)$ with respect to $Z=S$ such that the fibre $S_a$ is dense. Thus, 
	the fibre $S_a$ contains an element $b$ which is dense over $M\cup\{a\}$. 
	By Fact \ref{F:UdiExercise}, the dense pair $(a,b)$ is in 
	good position over $M$ and clearly lies in $S$.

		\begin{claim}\label{Claim:specialpt}
			For every pair $(a, b)$ in $S$ in good position over $M$ such that 
			$a$ satisfies 
			($\star$), there are an internal subset $Y'$ of $Y$ and an 
			$\omega$-internal subset $X'$ of $X$,  both defined 
			over $M$, such that $(a,b)$ lies in $X'\times Y'$ and $\mu((X'\times 
			Y')\setminus S)=0$. 
		\end{claim}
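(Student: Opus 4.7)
The plan is to find an internal set $Y' \subseteq Y$ defined over $M$ containing $b$ such that $\mu_{G^m}(Y' \setminus S_a) = 0$, and then to take
\[
X' := \{x \in X : \mu_{G^m}(Y' \setminus S_x) = 0\}.
\]
By Definition \ref{D:Tame}(IV), $X'$ is $\omega$-internal over $M$, and the vanishing condition on $Y'$ places $a$ inside $X'$. Moreover, hypothesis $(\star)$ guarantees that $X'$ has positive Loeb measure, which will be essential for the surrounding Proposition \ref{P:robuststable_boxes}. By Fubini--Tonelli (Remark \ref{R:Fubini}),
\[
\mu\bigl((X' \times Y') \setminus S\bigr) = \int_{X'}\mu_{G^m}(Y' \setminus S_x)\,d\mu_{G^n}(x) = 0,
\]
since the integrand vanishes identically on $X'$.

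To produce $Y'$, I first note that Theorem \ref{T:robuststable_main} applied to the good-position pair $(a, b) \in S$ implies every $b' \in \tp(b/M)$ dense over $M \cup \{a\}$ satisfies $(a, b') \in S$, hence lies in $S_a$. Since the $b' \in \tp(b/M)$ failing density over $M \cup \{a\}$ form a null set (being contained in a countable union of null internal sets defined over $M \cup \{a\}$), I obtain $\mu_{G^m}(\tp(b/M) \setminus S_a) = 0$. Writing $\tp(b/M) = \bigcap_n W_n$ as a decreasing intersection of internal sets $W_n \subseteq Y$ over $M$ each containing $b$, continuity of the Loeb measure then yields $\mu_{G^m}(W_n \setminus S_a) \to 0$.

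The main obstacle is upgrading this limit to strict vanishing for a particular $W_n$ (or otherwise exhibiting the required $Y'$). I expect this to go by contradiction: assuming no internal $W$ over $M$ containing $b$ has $\mu_{G^m}(W \setminus S_a) = 0$, the positive measure of each slice $W_n \setminus S_a$ supplies ``bad'' companions to $b$. Following the inductive blueprint of the proof of Theorem \ref{T:robuststable_main} and exploiting $(\star)$ together with Fact \ref{F:Measure_eq}(c), one then builds a sequence realising a half-graph of height $k$ in $S$ of positive measure in $G^{k(n+m)}$, contradicting the robust $k$-stability of $S$. The principal technical difficulty is the careful maintenance of density and good-position conditions along the induction; once $Y'$ is obtained, the construction of $X'$ and the Fubini verification are routine.
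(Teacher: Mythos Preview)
Your overall architecture is exactly that of the paper: find an internal $Y'\ni b$ over $M$ with $\mu_{G^m}(Y'\setminus S_a)=0$, set $X'=\{x\in X:\mu_{G^m}(Y'\setminus S_x)=0\}$, and conclude by Fubini. The difficulty you flag, however, is not a genuine obstacle; it is an artefact of using the wrong notion at the key step.

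You argue that $\mu_{G^m}\bigl(\tp(b/M)\setminus S_a\bigr)=0$ and then try to pass to some $W_n$ via continuity of measure, obtaining only $\mu_{G^m}(W_n\setminus S_a)\to 0$. But recall Remark~\ref{R:Dense}(a): an $\omega$-internal set can have Loeb measure $0$ and still be dense, so measure $0$ is too weak to feed into compactness. What you actually established is stronger: by Theorem~\ref{T:robuststable_main}, the $\omega$-internal set $\tp(b/M)\setminus S_a$ contains \emph{no} element dense over $M\cup\{a\}$, hence by Lemma~\ref{L:ExtensionDense} it is \emph{not dense}. Writing $\tp(b/M)\setminus S_a=\bigcap_n (W_n\setminus S_a)$ as a decreasing intersection of internal sets, Remark~\ref{R:Dense}(b) (equivalently, $\aleph_1$-saturation) gives immediately that some $W_n\setminus S_a$ is not dense, i.e.\ has measure $0$. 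Take $Y'=W_n$.

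This is precisely the paper's argument (``By $\aleph_1$-saturation, there is some internal subset $Y'$\dots''), and it is a one-liner. Your proposed contradiction via a half-graph construction is unnecessary, and you should drop it.
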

		\begin{claimproof}
			By Theorem \ref{T:robuststable_main},  there is no element in 
			$\tp(b/M)$ dense over $M\cup\{a\}$ outside of $S_a$, so  
			the $\omega$-internal subset $\tp(b/M) \setminus S_a$ of $Y$ 
			is not dense. By $\aleph_1$-saturation, there is some internal 
			subset $Y'$ of $Y$ defined over $M$ containing $b$ with 
			$\mu_{G^m}(Y'\setminus S_a)=0$. We 
			deduce  by $(\star)$ that $\mu_{G^n}(X')>0$, where 
			\[ X'= \{x \in X  : 
			\ \mu_{G^m}(Y'\setminus S_x)=0\}\] is an $\omega$-internal subset 
			(by Definition \ref{D:Tame} (IV)) containing $a$ (this set is 
			$D_{Y'}$ with $Z=S$ in the notation of Claim 
			\ref{Claim:family}). 
			
			A 
			straightforward computation using Fubini-Tonelli yields 
			\[
			\mu_{G^{n+m}} ((X'\times Y')\setminus S) = \int_{X'} 
			\mu_{G^m}(Y'\setminus S_x) \, \mathrm{d}\mu_{G^n} = 0, \]
			as desired. 
		\end{claimproof}
		
		Consider now the countable collection $\mathcal B_S$ of all 
		subsets 
		$X'\times Y'$ as in Claim \ref{Claim:specialpt}, that is, the 
		set $Y'$ is internal whilst $X'$ is $\omega$-internal, both are
		defined over $M$ and 	\[ X'= \{x \in X : 
		\ \mu_{G^m}(Y'\setminus S_x)=0\},\] so   
		$\mu_{G^{n+m}}((X'\times Y')\setminus 
		S)=0$. By 
		$\sigma$-additivity, we have that \[ \Big(\bigcup_{X'\times 
		Y'\in \mathcal B_S} (X'\times Y') \Big)\setminus S \] has measure 
		$0$ (with respect to the extension of the Loeb measure). Observe further that the set \[ Z=S\setminus \bigcup_{X'\times Y'\in \mathcal 
		B_S} (X'\times Y') \]
		belongs to the $\sigma$-algebra of internal sets defined over 
		$M$.  If $Z$ has positive measure, we deduce as in the discussion right 
		after Claim \ref{Claim:family} that $Z$ contains a pair $(a,b)$ in good 
		position over $M$ with $a$ satisfying 
		$(\star)$. It follows from Claim 
		\ref{Claim:specialpt} that $(a,b)$ lies in some 
		box $X'\times Y'$ in $\mathcal B_S$, contradicting the choice of  
		$Z$.  Thus, the sets $S$ and $\bigcup_{X'\times 
		Y'\in \mathcal B_S} (X'\times Y')$ are comparable  with respect to 
		the extension of the Loeb measure. We need only show that we 
		can replace the latter union, modulo an $\epsilon$-error, by 
		an actual finite union of internal 	boxes defined over $M$.
		
		Given $\epsilon>0$, by continuity of the Loeb measure there is some 
		integer $\ell$ 
		and subsets $X'_1\times 
		Y_1,\ldots, X'_\ell\times Y_\ell$ in $\mathcal B_S$ such that \[ 
		\mu_{G^{n+m}}( 
		S\setminus 
		\bigcup\limits_{i=1}^\ell (X'_i\times Y_i))<\frac{\epsilon}{2}.\] Note 
		that $\ell>0$, since $\mathcal B_S$ is non-empty, for $S$ is assumed to 
		be dense.   
		For each $1\le i\le \ell$, the $\omega$-internal subset 
		$(X'_i\times Y_i)\setminus 
		S$ has measure $0$, so we can find internal subsets $X_i$, with each $X_i$ containing $X'_i$, such that 
		\[\mu_{G^{n+m}}\big((X_i\times Y_i)\setminus S\big) 
		<\frac{\epsilon}{2\ell}.\]
		Note that $X_i$ is again internal and defined over $M$. Now set 
		$U=\bigcup_{i=1}^\ell (X_i\times Y_i)$, so $U$ 
		contains $\bigcup_{i=1}^\ell (X'_i\times Y_i)$, and it follows
		that 
		 \begin{align*}
		\mu_{G^{n+m}}\left(S \triangle U\right) & = \mu_{G^{n+m}} (S 
		\setminus U) + \mu_{G^{n+m}}(U\setminus S)  \\ & \le \mu_{G^{n+m}} 
		\big(S 
		\setminus \bigcup_{i=1}^\ell (X_i'\times Y_i) \big)  + \sum_{i=1}^{\ell}
		\mu_{G^{n+m}}\left((X_i\times Y_i)\setminus S \right) \\ & < 
		\frac{\epsilon}{2} + 
		\ell\cdot \frac{\epsilon}{2\ell} = \epsilon.
		\end{align*}
		This completes the proof of Proposition \ref{P:robuststable_boxes}.
	\end{proof}
	\section{Corners and squares}\label{S:corner}
	
Observe that the results in Section \ref{S:stab} did not make use of the underlying 
	group structure. In this section, on the other hand, the group structure will play a 
	fundamental role in guaranteeing the existence of certain 2-dimensional patterns in a dense 
	almost surely stable relation. 
	
	Given a countable rich subset $M$,  the group $G(M)=G\cap 
	M$ of 
	$M$-rational points of $G$ naturally acts on the collection of  types 
	over $M$ by \[  \tp(a/M) \star g = \tp(a\cdot g/M) .\] This is a right 
	action, but there is also a natural left action, defined analogously. With respect to the previous right action, we can thus consider the stabilizer in $G(M)$ of a type. This subgroup need not be definable in general. However, in the presence of stability, the stabilizer of a type becomes $\omega$-internal 	(or type-definable), though we will not need this for the purpose of this article.

	Since the seminal work of Kim and 
		Pillay \cite{KP97} on simple theories, many notions and techniques from 
	geometric stability  
	have been adapted successfully to other contexts. Pushing beyond simplicity, in groundbreaking work \cite{eH12}
	Hrushovski established the existence of an $\omega$-internal 
	subgroup of a non-standard finite group that plays 
	the role of the 
	\emph{stabilizer} of \emph{every} dense type over the countable rich substructure $M$.
	This subgroup is known in model theory as the \emph{connected 
	component} of $G$ over $M$ and denoted by $\GO M$.
	
%
	The next fact summarises the content of Hrushovski's \emph{stabiliser theorem}, tailored to our particular context. For ease of reference, the presentation largely follows the formulation of \cite[Theorem 2.12]{MOS18}. 
	
	\begin{fact}\textup{(}\cite[Theorem 3.5]{eH12} \& \cite[Theorem 2.12]{MOS18}\textup{)} 
	\label{F:G00}
		Let  $M$ be a countable rich subset of a non-standard finite group 
		$G$. 
		Then there exists an $\omega$-internal normal subgroup $\GO M$
		defined over $M$ with the following properties.
		\begin{enumerate}[(a)]
			\item\label{I:nbhood}  The $\omega$-internal subset $\GO M$ equals a countable 
			intersection of internal \emph{generic symmetric neighborhoods} 
			$Z_n$ of 
			the 
			identity, each defined over $M$, that is,
			\begin{itemize}
	\item  each $Z_n=Z_n\inv$ is symmetric and contains the identity element 
	$1_G$;
	\item finitely many translates of each $Z_n$ cover the group $G$;
	\item $Z_{n+1}\cdot Z_{n+1}\subseteq Z_n$ for all $n$ in $\N$. 
			\end{itemize}
		
			\item\label{I:diff} Whenever 
			$(a, b)$ is in good position over $M$ with $b$ in $\tp(a/M)$,
			the dense element $b\cdot 
			a\inv$ over $M$ belongs to $\GO M$. In particular, the 
			$\omega$-internal set $\GO M$ is dense.  
			\item\label{I:action}  Whenever $a$ is dense over $M$ and $g$ in $\GO M$ is 
			dense over $M$,  
			the $\omega$-internal set \[ \tp(a/M)\cap  \tp(a/M)\cdot g\] is 
			again dense,  that 
			is, there exists some $a'$  in $\tp(a/M)$ which is dense over 
			$M\cup\{g\}$ 
			such that $a'\cdot g$ belongs to $\tp(a/M)$.
			
			Similarly, the $\omega$-internal set  $\tp(a/M)\cap  g\cdot 
			\tp(a/M)$ is 		also dense,  so there exists some $a''$  in 
			$\tp(a/M)$ which is dense over 		$M\cup\{g\}$ 
			such that $g\cdot a''$ belongs to $\tp(a/M)$. 
			\item\label{I:GNsbgpGM} If $N$ is a countable rich subset of $G$ containing $M$, 
			then $\GO 
			N$ is a 
			subgroup of $\GO M$. 
			\end{enumerate}
		Furthermore, it follows that if 
		$G$ has finite exponent, then $\GO M$ is a countable 	
		intersection of internal subgroups, each defined over $M$ and 
		of finite index: Indeed,  we can equip  the quotient group $G/ \GO 
		M$ with the so-called \emph{logic topology}, according to which a subset in the quotient is closed if its preimage is $\omega$-internal and defined over $M$. The quotient group $G/ \GO 
		M$ is a compact Hausdorff topological group of finite exponent, and thus profinite by \cite[Theorem 4.5]{rI68}. Since $M$ and $\mathcal F$ are countable, the logic topology is second countable, so there is a fundamental system of open normal subgroups $H_n/ \GO M$ with $\GO M=\bigcap_n H_n$. Now, each quotient $H_n/\GO M$ is closed of finite index, by topological compactness, and thus the preimage $H_n$ is internal and defined over $M$, as desired.
	\end{fact}

Since the $\omega$-internal normal subgroup $\GO M$ equals a countable 
intersection of internal generic symmetric neighborhoods $Z_n$, each defined 
over $M$, the elements $a'$ of any given type $\tp(a/M)$ all lie in the 
same 
coset $a\cdot \GO M$. Indeed, we need only show that $a\inv \cdot a'$ belongs 
to $Z_n$ for every $n$ in $\N$. Now, the subset $M$ is rich,   so finitely many 
translates of each $Z_n$ by elements of $G(M)$ cover $G$. In particular, the 
element $a$ belongs to $g \cdot Z_{n+1}$ for some $g$ in $G(M)$ and therefore 
so does $a'$. Hence, we deduce that \[a\inv \cdot a' = a\inv \cdot (g\cdot g\inv)\cdot a'= (g\inv\cdot a)\inv\cdot (g\inv\cdot a')\] belongs to $(Z_{n+1})\inv\cdot 
Z_{n+1}=Z_{n+1}\cdot 
Z_{n+1}\subset Z_n$, as desired. 

We now have all the ingredients in order to state our main result. 
	\begin{theorem}\label{T:main_corners}
Consider a non-standard finite group $G$ along with an internal 
relation $S$ on $G\times G$ defined over a countable rich 
subset $M$.  If  $S$ is dense and almost surely $k$-stable for some $k\ge 1$,  then for every $g$ in $\GO M$ which is dense over $M$, the internal set 
\[\Lambda_\Box(S)_g= \left\{ (x,y)\in G\times G  : \   (x,y), (x\cdot g, 
y), (x, y\cdot g) \text{ and } (x\cdot g, y\cdot g) \text{ all lie in } S \right\} \]
has positive density.
	\end{theorem} 
	\begin{proof} 
		Fix some element $g$ in  $\GO M$ which is dense over $M$.  In order to show that the internal set $\Lambda_\Box(S)_g$ has positive density, we need only show that $\Lambda_\Box(S)_g$ contains a pair $(a, b)$ dense over $M\cup\{g\}$. We do so with the help of the following auxiliary claims.
		\begin{claim}\label{Claim:prod_inv}
			There is a tuple $(c, d)$ in good position over $M$ 
			with $c$ 
			and $d$ in $\tp(g/M)$  such that $g=c\inv\cdot 
			d$.
		\end{claim}
		\begin{claimproof}
By Fact \ref{F:G00} applied to  the 
dense element $g\inv$ of $\GO M$ and the dense type $\tp(g/M)$, 
we deduce that there is some 
element $a'$ in $\tp(g/M)$ dense over $M\cup\{g\}$ such that 
$g''=g\inv \cdot a'$ belongs to $\tp(g/M)$. Hence, for the element 
$g''$ in $\tp(g/M)$ there are two elements $c'=g$ and $d'=a'$ in 
$\tp(g/M)$ with $(c', d')$ in good position over $M$ such that 
$g''=(c')\inv \cdot d'$.  By Remark \ref{R:aleph_1} and Lemma \ref{L:Type_dense_coord}, we deduce that the 
same is true for every element in  $\tp(g''/M)=\tp(g/M)$. We thus obtain 
the desired tuple $(c, d)$ as in the statement with $g=c\inv\cdot d$. 
		\end{claimproof}
	\begin{claim}\label{Claim:translate_dense}
The internal set $S\cap S\cdot  (1_G, g\inv) $ is dense. 
	\end{claim}
		\begin{claimproof}
		Recall that $S$ is dense and thus so is $S\cdot(1_G, g\inv)$.  
		Hence, by Fact \ref{F:Measure_eq} (c), so is the 
		intersection $S\cdot(1_G, c\inv)\cap S\cdot(1_G, d\inv)$, since 
		the tuple $(c, d)$ as in Claim \ref{Claim:prod_inv} is in good 
		position over $M$ and both $c$ and $d$ lie in the dense type 
		$\tp(g/M)$.  Multiplying on the right by $(1_G, c)$, we deduce that $S\cap S 
		\cdot (1_G, 
		g\inv)$ is dense, as desired. 
				\end{claimproof}
		
		By Claim \ref{Claim:translate_dense} and Fact 
		\ref{F:UdiExercise}, there is a pair $(a_0, b)$ in good position over 
		$M\cup \{g\}$ contained in $S\cap S\cdot (1_G,  g\inv)$. That 
		is, both pairs $(a_0, b)$ and $(a_0, b\cdot g)$ belong to $S$.  
		
		 Note that we have not yet used that $S$ is almost surely stable. The 
		 idea is to replace $a_0$ by a suitable realisation $a$ such that the product
		 $ a\cdot g$  lies again in $\tp(a/M)$. 
		 
		 Indeed, Fact \ref{F:G00} (\ref{I:action}) yields that 
		 there is some $a$ in  $\tp(a_0/M)$ dense over $M\cup\{g\}$ such 
		 that  $a\cdot g$ lies again in $\tp(a_0/M)$. We may assume that 
		 $a$ is 
		 dense over $M\cup\{b,g\}$, by Lemma \ref{L:ExtensionDense}. Clearly, 
		 each of the pairs 
		 $(b, a)$, $(b, a\cdot g)$, $(b\cdot g, a)$ and $(b\cdot g, a\cdot g)$ 
		 is in good  position over $M$. 
		 
		 Theorem \ref{T:robuststable_main} implies that 
		 \[\GP(\tp(a_0/M), \tp(b/M))\subseteq  S \]
		 since the pair $(a_0, b)$ lies in $S$ and is in good position over 
		 $M$. Thus both pairs $(a, b)$ and $(a\cdot g, b)$ lie in $S$. 
		 
		 Now the pair $(a_0, b\cdot g)$ also lies in $S$ and is in good position 
		 over $M$. Again by Theorem \ref{T:robuststable_main}, we 
		 conclude that the pairs $(a, b\cdot g)$ and $(a\cdot g, b\cdot g)$ of 
		 $\GP(\tp(a'/M), \tp(b\cdot g/M))$ must 
		 lie in $S$.  In particular, the pair $(a, b)$ is dense over $M\cup\{g\}$ and belongs to the internal set $\Lambda_\Box(S)_g$, as desired.
	\end{proof}
 A straightforward application of Fubini-Tonnelli (Remark \ref{R:Fubini}) yields the following corollary.
\begin{cor}\label{C:manysquares}
Consider a non-standard finite group $G$ along with an internal 
relation $S$ on $G\times G$ defined over a countable rich 
subset $M$.   If  $S$ is dense and almost surely $k$-stable for some $k\ge 1$,  then the internal set 
\[\Lambda_\Box(S)= \left\{ (x,y, g)\in G^3   : \   (x,y), (x\cdot g, 
y), (x, y\cdot g) \text{ and } (x\cdot g, y\cdot g) \text{ all lie in } S \right\} \]
 has positive density. 
 \qed
\end{cor}


In particular, $S$ contains a dense collection of both BMZ and naive corners, as remarked
in the introduction.  Moreover, if $G$ is abelian, then there is a dense collection of triples $(x, y, d)$ in $G^3$ such that $(x, y)$, $(x+d, y)$, $(x, y+d)$, $(x+d, y+d)$ form a square in $S$.

In the special case of a binary relation given by the Cayley graph of 
a subset $A$ of $G$, the proof of Theorem \ref{T:main_corners} 
yields a stronger result, since every dense element $g$ in $\GO M$ is 
a popular side length for many corners. 
\begin{prop}\label{P:AP_robustlystable}
Consider a non-standard finite group $G$ and an internal 
almost surely $k$-stable dense subset $A$ of $G$ defined over a countable 
rich subset $M$.  For every $g$ in $\GO M$ which is dense over $M$, the sets 
$A$ and $g\cdot A$ are comparable, that is,  \[ \mu_G(A\triangle 
(g\cdot A))=0.\] 

In particular, whenever the element $a$ in $A$ is dense over $M\cup\{g\}$, 
$g^m\cdot a$ belongs to $A$ for all $m$ in $\Z$. Hence, the 
 $\omega$-internal set $\bigcap_{m\in \Z} g^m\cdot A$ is dense with measure 
 $\mu_G(A)>0$. 
\end{prop}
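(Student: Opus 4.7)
The strategy is to apply Corollary \ref{C:keylemma} to the Cayley graph $\mathrm{Cay}(G, A) = \{(x,y) \in G \times G : x^{-1} \cdot y \in A\}$, which is robustly $k$-stable by hypothesis and whose fibre at $c \in G$ is precisely $c \cdot A$. The stationarity statement then guarantees that $c \cdot A$ and $c' \cdot A$ are comparable whenever $c$ and $c'$ lie in a common dense type over $M$. The game is to realise the translate $g \cdot A$ as a fibre at some $c'$ sharing a dense type with $c$.

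First I would fix some element $a_0 \in G$ dense over $M$, which exists by Lemma \ref{L:ExtensionDense} since $\mu_G(G) = 1$. Applying Fact \ref{F:G00}(c) to the dense element $g \in \GO M$ and to the dense type $\tp(a_0/M)$, there exists $c \in \tp(a_0/M)$, dense over $M \cup \{g\}$, such that $c \cdot g$ also belongs to $\tp(a_0/M)$. Corollary \ref{C:keylemma} then yields that the fibres $c \cdot A$ and $(c \cdot g) \cdot A$ are comparable. Since left multiplication by $c^{-1}$ is a Loeb-measure-preserving bijection on $G$, we conclude that $A$ and $g \cdot A$ are comparable, proving the first assertion.

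For the second assertion, let $a \in A$ be dense over $M \cup \{g\}$. Since $g^{-1}$ is also a dense element of $\GO M$ over $M$, the first assertion applied to $g^{-1}$ yields that the internal set $A \setminus g^{-1} \cdot A$, defined over $M \cup \{g\}$, has Loeb measure zero. As $a$ is dense over $M \cup \{g\}$, its type $\tp(a/M \cup \{g\})$ cannot be contained in this null internal set, so $a \in g^{-1} \cdot A$, that is, $g \cdot a \in A$. By Remark \ref{R:Dense}(d), the element $g \cdot a$ remains dense over $M \cup \{g\}$, so we may iterate to obtain $g^m \cdot a \in A$ for every $m \ge 0$; the case $m < 0$ follows by applying the same argument to $g^{-1}$.

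Finally, Lemma \ref{L:ExtensionDense} provides an element $a \in A$ dense over $M \cup \{g\}$ because $A$ is dense; by the preceding paragraph, $g^{-m} \cdot a \in A$ for every $m \in \Z$, equivalently $a \in \bigcap_{m \in \Z} g^m \cdot A$, so this $\omega$-internal set is dense. By $\sigma$-subadditivity of the extension of the Loeb measure,
\[
A \setminus \bigcap_{m \in \Z} g^m \cdot A \;=\; \bigcup_{m \in \Z} (A \setminus g^m \cdot A)
\]
is a countable union of null sets and hence null; combined with the inclusion $\bigcap_{m} g^m \cdot A \subseteq A$ obtained by taking $m = 0$, this forces $\mu_G\!\left(\bigcap_{m \in \Z} g^m \cdot A\right) = \mu_G(A) > 0$. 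I do not expect a serious obstacle: the entire argument rests on the pairing of Corollary \ref{C:keylemma} with Fact \ref{F:G00}(c), which together let us upgrade stationarity over $M$ into genuine left-translation invariance by every dense element of $\GO M$.
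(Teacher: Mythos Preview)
Your argument is correct and follows essentially the same route as the paper: both proofs exploit the stationarity of the Cayley graph $\mathrm{Cay}(G,A)$ over a dense type, and both conclude the second part by iterating comparability of $A$ with its translates. The only cosmetic difference is that the paper writes $g=c^{-1}\cdot d$ with $c,d\in\tp(g/M)$ in good position (via Claim~\ref{Claim:prod_inv}) and then invokes Theorem~\ref{T:robuststable_main} directly, whereas you produce $c$ and $c\cdot g$ in a fresh dense type via Fact~\ref{F:G00}(c) and appeal to the packaged Corollary~\ref{C:keylemma}; your $\sigma$-subadditivity computation for the final measure is likewise equivalent to the paper's continuity-plus-finite-intersections argument.
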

\begin{proof}
As in Claim \ref{Claim:prod_inv} of Theorem \ref{T:main_corners}, 
write $g=c\inv \cdot d$, where $(c, d)$ are in good position over $M$ 
and both lie in the same type $\tp(g/M)$. Suppose towards a contradiction 
that $\mu_G(A\triangle (g\cdot A))>0$, or  equivalently, that 
\[ \mu_G\big((c\cdot A) \triangle  (d\cdot A))>0.\]
Without loss of generality, we may assume that $(c\cdot A)\setminus 
(d\cdot A)$ is dense, so it contains some element $b$ which is dense over 
$M\cup\{c,d\}$. This means that $c\inv\cdot b$ belongs to $A$ but 
$d\inv \cdot b$ does not, or equivalently, the pair $(c, b)$ belongs to the almost surely
$k$-stable relation $\mathrm{Cay}(G,A)$, but $(d, b)$ does not. Since both 
pairs belong 
to $\GP(\tp(g/M), \tp(b/M) )$, this contradicts Theorem 
\ref{T:robuststable_main}.

To prove the final part, observe that since $A$ and $g\cdot A$ are comparable, so are $A$ and 
$g\inv \cdot A$. By continuity, we need only show that each 
intersection \[ 
\bigcap_{-m\le i\le m} g^i\cdot A \]
has constant measure $\mu_G(A)$ for all $m$ in $\Z$. Otherwise, the internal 
set \[ A \setminus \bigcap_{-m\le i\le m} g^i\cdot A \] 
is dense, so by Lemma 
\ref{L:ExtensionDense} we may choose an element $a$ in 
$A$ which is dense over $M\cup\{g\}$. Remark \ref{R:Comp} implies that 
$a$ must belong to both $g\cdot A$ and  $g\inv \cdot A$, and inductively, we 
conclude that $a$ 
belongs to $g^{i} \cdot A$ for all $-m\le i\le m$, which gives the desired 
contradiction. 
\end{proof}

We conclude this section by deducing a finitary (albeit 
ineffective) version of
Corollary \ref{C:manysquares}, using Lemma \ref{L:Transfer} and the accompanying Remark \ref{R:Transfer}, and strengthening it in two special cases.

\begin{cor}\label{C:corners}
Given an integer $k\ge 1$ and a real number $\delta>0$, there is an integer 
$\ell_0(k, \delta)\ge 1$ and real numbers $\theta=\theta(k, \delta)>0$ and  
$\epsilon=\epsilon(k, \delta)>0$ with the following property. 

Let $G$ be a finite group of order $|G|\ge \ell_0$ and let  $S\subseteq 
G\times G$  be a relation of size $|S|\ge \delta |G|^{2}$ such that the collection  
$\HH_k(S)$ of all 
half-graphs of height $k$ induced by $S$ on  $G$ has size $|\HH_k(S)|\le \theta 
|G|^{2k}$. Then $\Lambda_\Box(S)$ 
has size at least $\epsilon |G|^{3}$. 

In particular, the relation $S$ contains a (non-trivial) square provided that $|G|\geq 1/\epsilon$.
\end{cor}
\begin{proof}
The proof proceeds by contradiction. Suppose there are $k\ge 1$ and 
$\delta>0$ such that for every $\ell\ge 1$ (setting $\theta=\epsilon=1/\ell$), we can find a finite group $G_\ell$ of order at least 
$\ell$ 
and a relation $S_\ell\subseteq G_\ell\times G_\ell$ of density $\delta$ such that 
$\HH_k(S_\ell)\le \frac{1}{\ell} |G_\ell|^{2k}$, yet 
$|\Lambda_\Box(S_\ell)|<|G_\ell^{3}|/\ell$.

In particular, the family $(G_\ell,  S_\ell)_{\ell\in \N}$ is 
almost surely $k$-stable, as in Remark \ref{R:robust_asympt}. Take a non-principal 
ultrafilter $\Ult$ on $\N$ and consider the non-standard finite group 
$G=\prod_{\ell\to \Ult} G_\ell$ as well as the internal set $
S=\prod_{\ell\to \Ult} 
S_\ell$. By Remark \ref{R:Transfer}, the set $S$ has Loeb measure 
$\mu_{G^{2}}(S)\ge \delta$ and is almost surely $k$-stable by  Remark 
\ref{R:robust_asympt}. Choose any countable rich subset $M$. Corollary \ref{C:manysquares} yields that $\mu_{G^{3}}(\Lambda_\Box(S))$ is at least $\eta$ for some $\eta>0$.

Remark \ref{L:Transfer} applied to $r=\eta$ yields 
the desired contradiction, since  
$\Lambda_\Box(S_\ell)$ has size at most $\eta|G_\ell|^{3}/2$ for sufficiently large $\ell$.   
\end{proof}

In the particular case that the group in question has bounded exponent, we 
can strengthen Corollary \ref{C:corners} using Fact \ref{F:G00} to obtain a 
subgroup of bounded index almost all of whose elements witness the 
existence of a square. 
\begin{cor}\label{C:corners_bddexp}
	Given integers $k, r\ge 1$ and real numbers $\delta, \epsilon>0$, there is 
	an integer 
	$\ell=\ell(k, r, \delta, \epsilon)\ge 1$ and real numbers
	$\theta=\theta(k, r,  
	\delta, \epsilon)>0$ and $\eta=\eta(k, r, \delta, \epsilon)>0$ with the following property.

Let  $G$ be a finite group of exponent bounded by $r$, and consider a relation $S\subseteq 
	G\times G$  of size $|S|\ge \delta |G|^{2}$ such that the collection  
	$\HH_k(S)$ of all 
	half-graphs of height $k$ induced by $S$ on  $G$ has size $|\HH_k(S)|\le 
	\theta 
	|G|^{2k}$. Then there exists a subgroup $H$ of $G$ of index at most 
	$\ell$ such that 
	\[  \left|\left\{ g\in H  : \ |\Lambda_\Box(S)_g| <\eta |S|\right\} \right| <  \epsilon |H|,\]  where $\Lambda_\Box(S)_g\subseteq S$ is as in Theorem \ref{T:main_corners}.
\end{cor}
\begin{proof}
As in the proof of Corollary \ref{C:corners}, negating quantifiers, we deduce 
that for a fixed choice of $k, r, \delta$ and $\epsilon$, for every 
$\ell$ in $\N$ 
(setting $\theta=\eta=1/\ell$) there is a finite group $G_\ell$ of exponent at most $r$ and a 
relation 
$S_\ell\subseteq G_\ell\times G_\ell$ of density $\delta$ such 
that 
\[ |\HH_k(S_\ell)|\le \frac{1}{\ell} |G_\ell|^{2k},\] yet for 
every 
subgroup $H\leqslant G_\ell$ of index 
at most $\ell$, the subset of elements $g$ in 
$H$ with \[ |\Lambda_\Box(S)_g| <\frac{|S|}{\ell} \]  has size at least 
$\epsilon |H|$. In particular, the group $G_\ell$ has size at least $\ell$, by taking $H$ the trivial subgroup of $G_\ell$.

Take a non-principal 
ultrafilter $\Ult$ on $\N$ and consider the non-standard finite group 
$G=~\prod_{\ell\to \Ult} G_\ell$ as well as the internal set $ S=\prod_{\ell\to \Ult} 
S_\ell$. By Remark \ref{R:Transfer}, the set $S$ has Loeb measure 
$\mu_{G^{2}}(S)\ge \delta$ and is almost surely $k$-stable by  Remark 
\ref{R:robust_asympt}.  Choose any countable 
rich subset $M$. Notice that the exponent of the non-standard finite group $G$ is 
bounded by $r$ by \L o\'s's Theorem (Theorem \ref{R:internal}). Fact \ref{F:G00} yields 
that the subgroup $\GO M$ is a countable intersection of internal subgroups $H_n$, each of finite index and defined over $M$.  We may assume that $H_{n+1}\subseteq H_n$ for every $n$ in $\N$.

Definition \ref{D:Tame} tells us that the set \[
    Z=\{g \in \GO  M : \ \mu_{G^{2}}(\Lambda_\Box(S)_g)=0\} = \bigcap\limits_{\substack{n\in \N \\ 0\ne m\in \N}} \left\{g \in H_n : \ \mu_{G^{2}}(\Lambda_\Box(S)_g)<\frac{1}{m}\right\}  \] is $\omega$-internal.

Fact \ref{F:UdiExercise} and Theorem \ref{T:main_corners} imply that $Z$ is not dense, so by $\aleph_1$-saturation and Remark \ref{R:Dense}, we 
deduce that for some $0\ne m$ in $\N$  and some finite-index internal subgroup $H=H_n$ 
defined over $M$, the set $\{g \in H  : \   \mu_{G^{2}}(\Lambda_\Box(S))_g) < \frac{1}{m}\}$ is contained in some internal set of density $0$. 

Choose some $\ell_0$ in $\N$ with \[ 0<\frac{\mu_{G^{2}}(S)}{\ell_0}< \frac{1}{2m}\] such that for $\Ult$-almost all $\ell\ge \ell_0$,  the subgroup $H(G_{\ell})$ has index $[G:H]\le \ell_0$.  Now, Definition \ref{D:Tame} yields an internal set $W$ with  \[\left\{g\in H  : \  \mu_{G^{2}}(\Lambda_\Box(S))_g) < \frac{1}{2m} \right\}\subseteq W\subseteq \left\{g\in H  : \  \mu_{G^{2}}(\Lambda_\Box(S))_g) \le \frac{1}{2m}\right\},\] whence $\mu_G(W)=0$ and $ \mu_G(W)<\epsilon\cdot \mu_G(H)=\epsilon/[G:H]$. Therefore, for $\Ult$-almost all $\ell\ge \ell_0$,  we have that 
 \[ \left|\left\{g \in H(G_\ell) : \ |\Lambda_\Box(S_\ell)_g| < \frac{|S_\ell|}{\ell} \right\} \right| \le |W(G_\ell)|<
\frac{\epsilon}{[G:H]} |G_\ell| = \epsilon |H(G_\ell)|.\] This 
contradicts our choice 
of $G_\ell$, as desired. 
\end{proof}

Mimicking the proof above, together with Proposition \ref{P:AP_robustlystable}, 
we deduce the 
following finitary statement concerning the existence of arbitrary long arithmetic 
progressions in almost surely stable dense sets.

\begin{cor}\label{C:AP_robust_bddexp}
	Given integers $k, m, r\ge 1$ and real numbers $\delta, \epsilon, 
	\eta>0$, there 
	is
	an integer 
	$n=n(k, m, r, \delta, \epsilon, \eta)\ge 1$ and a real number 
	$\theta=\theta(k, m, 
	r,  
	\delta, \epsilon, \eta)>0$ with the following property.
	
	Let $G$ be a finite group of exponent bounded by $r$, and let $A\subseteq G$ be subset of $G$ of size $|A|\geq \delta|G|$ such that the collection  
	$\HH_k(\mathrm{Cay}(G,A))$ of all 
	half-graphs of height $k$ induced by its Cayley graph on  $G$ has size 
	$|\HH_k(\mathrm{Cay}(G,A))|\le 
	\theta 
	|G|^{2k}$. Then there exists a subgroup $H$ of $G$ of index at most 
	$n$ such that 
	\[  \left| \{ h \in H  : \ |\text{m-}{\rm AP}(A)_h| 
	<(1-\eta)|A|\} \right| < \epsilon |H|, \]
	where $\text{m-}{\rm AP}(A)_h= \{ a\in A  : \  \{a,h\cdot 
	a,\ldots,h^{m-1}\cdot a\} \subseteq A\}$. 
\end{cor}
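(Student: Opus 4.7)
The plan is to mimic the contradiction argument used in the proof of Corollary \ref{C:corners_bddexp}, substituting the existence of squares with that of $m$-term arithmetic progressions provided by Proposition \ref{P:AP_robustlystable}. Assume the statement fails for some fixed $k, m, r, \delta, \epsilon, \eta$: then for every $n\ge 1$ (setting $\theta=1/n$) one can find a finite group $G_n$ of exponent at most $r$ and a subset $A_n\subseteq G_n$ of density at least $\delta$ with $|\HH_k(\mathrm{Cay}(G_n,A_n))|\le \frac{1}{n}|G_n|^{2k}$, such that the finitary \emph{bad} set
\[
B_n:=\{h\in G_n \ | \ |\text{m-}{\rm AP}(A_n,h)|<(1-\eta)|A_n|\}
\]
satisfies $|B_n\cap H|\ge \epsilon |H|$ for every subgroup $H\le G_n$ of index at most $n$.

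Fixing a non-principal ultrafilter $\Ult$ on $\N$, I would form the non-standard finite group $G=\prod_{n\to\Ult} G_n$ together with the internal sets $A=\prod_{n\to\Ult} A_n$ and $B=\prod_{n\to\Ult} B_n$. By \L o\'s's theorem $G$ has exponent at most $r$; by Remark \ref{R:Transfer} the density $\delta':=\mu_G(A)\ge \delta$; and by Remark \ref{R:robust_asympt} the Cayley graph $\mathrm{Cay}(G,A)$ is robustly $k$-stable. Choose then a countable rich subset $M$ (with respect to a countable tame family large enough to contain all relevant sets), so that Fact \ref{F:G00} yields a presentation $\GO M=\bigcap_j H_j$ as a decreasing intersection of internal finite-index subgroups defined over $M$.

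Next, for every $g\in\GO M$ dense over $M$, Proposition \ref{P:AP_robustlystable} implies that the internal fibre
\[
R_g:=A\cap g^{-1}A\cap\cdots\cap g^{-(m-1)}A,
\]
whose $G_n$-points are precisely $\text{m-}{\rm AP}(A_n,g_n)$, has measure $\mu_G(R_g)=\delta'$. Choose a rational $q$ with $(1-\eta)\delta'<q<\delta'$, and apply Definition \ref{D:Tame} (IV) to produce an internal set $Y_q$ defined over $M$ with
\[
\{h\in G \ | \ \mu_G(R_h)>q\}\subseteq Y_q\subseteq \{h\in G \ | \ \mu_G(R_h)\ge q\}.
\]
Every dense $g\in\GO M$ lies in $Y_q$, while every $h\in B$ satisfies $\mu_G(R_h)\le (1-\eta)\delta'<q$, and hence $h\notin Y_q$; here we use that $|R_h(G_n)|/|G_n|<(1-\eta)|A_n|/|G_n|$ for $\Ult$-almost all $n$ together with $|A_n|/|G_n|\to \delta'$. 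By Lemma \ref{L:ExtensionDense}, the $\omega$-internal set $\GO M\setminus Y_q$ cannot be dense (any dense element would lie in $Y_q$), so by Remark \ref{R:Dense}(b) there is some $H:=H_{j_0}$ with $\mu_G(H\setminus Y_q)=0$.

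To conclude, the inclusion $B\cap H\subseteq H\setminus Y_q$ gives $\mu_G(B\cap H)=0$. On the other hand, since $[G_n:H(G_n)]=[G:H]$ for $\Ult$-almost all $n$ and is eventually at most $n$, the finitary hypothesis yields $|B_n\cap H(G_n)|\ge \epsilon|H(G_n)|$ for $\Ult$-almost all $n$, whence Remark \ref{R:Transfer} applied to $B\cap H$ inside $G$ forces $\mu_G(B\cap H)\ge \epsilon/[G:H]>0$, a contradiction. The only delicate point is selecting the rational threshold $q$ strictly between $(1-\eta)\delta'$ and $\delta'$ so that the ultraproduct of the bad sets $B_n$ becomes an internal subset of $G\setminus Y_q$; Definition \ref{D:Tame} (IV) is precisely what allows us to replace the real-valued threshold by a rational one inside the tame family.
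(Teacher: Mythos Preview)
Your argument is correct and follows exactly the route the paper indicates: mimic the proof of Corollary~\ref{C:corners_bddexp}, replacing Theorem~\ref{T:main_corners} by Proposition~\ref{P:AP_robustlystable}. Your introduction of the rational threshold $q$ with $(1-\eta)\delta'<q<\delta'$ and the internal set $Y_q$ from Definition~\ref{D:Tame}~(IV) is the right device to mediate between the finitary condition $|\text{m-AP}(A_n,h)|<(1-\eta)|A_n|$ and the measure condition $\mu_G(R_h)=\delta'$; the paper leaves this translation implicit, so your write-up is in fact more detailed than what the paper provides.
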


\section{Grids and $L$-shapes}\label{S:grid}

In the preceding section we saw that almost sure stability was sufficient to imply the existence of squares in dense subsets of Cartesian products of arbitrary finite groups. In order to extend the previous results to other $2$-dimensional shapes, such as $L$-shapes or $3\times 2$-grids, we will need to impose some (mild) conditions on the nature of the groups in question. 

As mentioned in the introduction, in recent work \cite{sP22} Peluse obtained the first reasonable upper bound on the density of $2$-dimensional subsets of $\mathbb F_p^n$ without $L$-shaped configurations. We will show how to obtain a qualitative version of her result valid in 
finite abelian groups of odd order under the assumption of almost sure stability.

 Given a subset $X$ of $G$, every solution $(x, y, z)$ in $X^3$ to the equation 
 \[ 
 x \cdot y= z^2
 \] 
 with $x\ne y$ determines a \emph{generalised arithmetic progression of length} $3$ in $X$. Indeed, if $x\cdot y=z^2$ with $x\ne y$, then $g= x\inv\cdot z= y\cdot z\inv$ is non-trivial and satisfies that all three elements $x, x\cdot g$ and $g\cdot x\cdot g$ belong to $X$. If the group $G$ is abelian, this is an arithmetic progression in the classical sense, for $g\cdot x\cdot g=g^2\cdot x$.   In what follows, we denote by $\EE(X)$ the set \[ \EE(X)=\{ (x,y) \in X^2 :\  x\cdot y= z^2 \text{ for some $z$ in } X \}.\]

\begin{definition}\label{D:suitable}
 The non-standard finite group $G$ is \emph{suitable} for the equation $x\cdot y=z^2$ if the following conditions hold:
\begin{itemize}
\item[(i)] \emph{Squaring preserves dense elements} (cf. \cite[Definition 3.12]{MPP21}), that is, for every element $a$ and 
every countable set of parameters $B$, we have that $a$ is dense 
over $B$ if and only if the element $a^2$ is dense over $B$. 
\item[(ii)] For every dense internal subset $X$ of $G$, the internal set $\EE(X)$ 
has positive density in $G^2$. 
\end{itemize}
 \end{definition}
 Whilst (i) implies (ii) in any non-standard finite group $G$ (see \cite[Theorem 3.14]{MPP21}), for the sake of a self-contained presentation we have decided to impose (ii) as an additional condition, since the examples we are most interested in already verify this condition by classical results in additive combinatorics, as the next remark shows. 
\begin{remark}\label{R:ultr_suitable}
Every non-standard finite group $G$ obtained as a non-principal ultraproduct of a family $(G_\ell)_{\ell\in \N}$ of finite abelian groups of odd order is suitable for the equation $x\cdot y=z^2$.  Indeed, in such a  non-standard finite abelian group $G$ as above there are no involutions, so $a$ is the only element in the fiber of $a^2$ with respect to the group homomorphism $x\mapsto x^2$. Thus, density of elements is preserved, by Fubini-Tonelli (Remark \ref{R:Fubini}). 

There is a plethora of explicit lower bounds for $\EE(X)$ for finite abelian groups of odd order. For instance, Bloom and Sisask showed in \cite[Theorem 2.1]{BS19} that whenever a finite subset $X_\ell$ of a finite abelian group $G_\ell$ of odd order has density $|X_\ell|/|G_\ell|\ge \sigma$, then $|\EE(X_\ell)|\ge f(\sigma) |G_\ell|^2$, with \[ f(\sigma)= \sigma^2\exp(-C \sigma\inv\log(\sigma^{-1})^C)\] for some absolute constant $C>0$.\footnote{For the best bound available at the time of writing, see \cite{BS23}, which is based on the breakthrough work of Kelley and Meka \cite{KM23}.} It follows easily from  \L o\'s's Theorem that for every  internal subset $X$ of density $\mu_G(X)\ge \sigma$ in the non-standard finite abelian group $G$, $\mu_{G^2}(\EE(X))\ge f(\sigma)>0$, as desired. 
\end{remark}

\begin{theorem}\label{T:Lshape}
Consider a non-standard finite group $G$ suitable for the equation $x\cdot y=z^2$ along with an internal 
relation $S$ on $G\times G$ defined over a countable rich 
subset $M$ of $G$. If $S$ is dense and  almost surely $k$-stable for some $k\ge 1$,  then there is a tuple $(a, g, b)$ of $G^3$ in good position over $M$ such that $(a, b, g)$ belongs to the internal set 
\[ \Lambda_{3\times 2}(S)= \left\{ (x, y,h) \in G^3 : 
\  
\parbox{8cm}{$(x, y), (x\cdot h, y), (h\cdot x\cdot h, y),  (x, y\cdot h), \\ (x\cdot h, y\cdot h) \text{ 
	and }  (h\cdot x\cdot h, y\cdot h) \mbox{ all lie in } S$}
\right\}.\] 
In particular, the set $\Lambda_{3\times 2}(S)$ has positive density in $G^3$, by Fact \ref{F:UdiExercise} and Definition \ref{D:Tame} (II). 
\end{theorem}
\begin{proof}
Set $\delta=\mu_{G^{2}}(S)/2>0$. By Definition \ref{D:Tame}, 
there is an internal set $Y_\delta$ defined over $M$ such that \[ 
\{y\in G  : \  
\mu_G(S_y)> \delta \}\subseteq Y_\delta\subseteq \{y\in G : 
\ 
\mu_G(S_y)\ge \delta \},\] where $S_y=\{x\in G  : \  (x, y) \in S\}$ denotes the fiber of $S$ over $y$. By Fubini-Tonelli 
(Remark \ref{R:Fubini}), the internal set $Y_\delta$
 is dense. Hence, choose some $c$ in $G$ dense over $M$ with $\mu_G(S_{c})\ge \delta$. By Remark \ref{R:LS}, there is a countable rich 
subset $N$ containing $M\cup\{c\}$. Notice that the internal dense subset 
$S_{c}\subseteq G$ is defined over $N$.

\begin{claim*}
There exists a tuple $(a_1, a_2, a_3)$ in $S_c^3$ such that
\begin{itemize}
    \item[(i)] $a_1\cdot a_2=a_3^2$;
    \item[(ii)] the difference $g=a_1\inv \cdot a_3=a_2\cdot a_3\inv$ is dense over $N$ and belongs to $\GO N$; 
    \item[(iii)] the pair $(a_1, g)$ is in good position over $N$. 
\end{itemize}
\end{claim*}

\begin{claimproof*}
It suffices to find a triple satisfying (i) and (ii) in the statement of the claim with $(a_1,a_3)$ in good position over $N$, or equivalently, with $(a_1,a_2)$ in good position over $N$, since squaring preserves dense elements.

Write $\GO N$ as a countable intersection of internal generic symmetric neighbourhoods $Z_j$, with $j$ in $\N$, defined over $N$ as in Fact \ref{F:G00} (\ref{I:nbhood}). In particular, for each $Z_j$ there are finitely many elements $t_1(j),\ldots, t_{n_j}(j)$ in $G$ such that \[ G=Z_j\cdot t_1(j)\cup\cdots\cup Z_j\cdot t_{n_j}(j).\]
Note that we may find such elements $t_r(j)$ in $N$, since $N$ is a rich subset of $G$. Moreover, the product set  $Z_{j+1}\cdot Z_{j+1}$ is a subset of $Z_j$. Assume towards a contradiction that there is no triple as required, and consider the $\omega$-internal set  
 \[
 \mathcal Z=  \bigcap\limits_{\substack{j\in \N \\ \mathcal X \in \mathcal F}} \left\{ (x,y,z) \in S_c^{3}  : \ \text{$x\cdot y= z^2$ with  $x\inv \cdot z=y\cdot z\inv$ in $Z_j$  and  $(x,y) \notin \mathcal X$}  \right\},
 \] 
 where $\mathcal F$ is the countable collection of all internal subsets $\mathcal X$ of $G^2$ defined over $N$ of density $0$.  Notice that the projection $\pi$ of $\mathcal Z$ onto the first two coordinates is again $\omega$-internal by Remark \ref{R:InfInt} and empty, for otherwise it would contain a pair $(a_1,a_2)$ in good position over $N$, by Fact \ref{F:UdiExercise}, but such a pair would yield a triple as in the statement.  Therefore, the $\omega$-internal set $\mathcal Z$ must be empty as well, since $\pi(\mathcal Z)=\emptyset$. By Fact \ref{F:Saturation}, there are finitely many sets $\mathcal X_1, \ldots, \mathcal X_r$ in $\mathcal F$ and some $j$ in $\N$ such that the internal set 
   \[ \widetilde{\mathcal Z}=\left\{ (x,y,z) \in S_c^{3}  : \    x\cdot y=  z^2  \text{ with $x\inv \cdot z=y\cdot z\inv$ in $Z_j$ } \right\}\] is covered by $\bigcup_{i=1}^r \pi\inv(\mathcal X_i)$. Now, the internal set $S_c$ is dense, so there exists some $t_r(j+1)$ in $N$ such that $S_c\cap (Z_{j+1}\cdot t_r(j+1))$ is dense as well. It follows from the suitability of $G$ that the corresponding set $\EE(S_c\cap (Z_{j+1}\cdot t_r(j+1))$ must have positive density. Hence, there is a dense pair $(u_1,u_2)$ over $N$ with each $u_i$ in $S_c\cap(Z_{j+1}\cdot t_r(j+1))$ such that \[ u_1\cdot u_2=  u_3^2 \text{  for some $u_3$ in  $S_c\cap(Z_{j+1}\cdot t_r(j+1))$.} \] 
Now the pair $(u_1,u_2)$ avoids all $\mathcal X_i$ and the common difference 
\[
u_1\inv\cdot u_3 = u_2\cdot u_3\inv=u_2\cdot (t_r(j+1)\inv \cdot  t_r(j+1) ) \cdot u_3\inv =( u_2\cdot t_r(j+1)\inv ) \cdot (u_3\cdot t_r(j+1)\inv)\inv 
\]
belongs to $Z_{j+1}\cdot Z_{j+1}\subseteq Z_j$. Thus, the triple $(u_1,u_2, u_3)$ belongs to $\widetilde{\mathcal Z}$ and thus to some $\pi\inv(\mathcal X_j)$. This implies that  the dense pair $(u_1,u_2)$ over $N$ belongs to the internal set $\mathcal X_j$ defined over $N$ of density $0$, which yields the desired contradiction. 
\end{claimproof*}

Choose now a tuple $(a_1,a_2,a_3)$ in $S_c^{3}$ as in the Claim with  $a_1\cdot a_2 = a_3^2$ , where $g=a_1\inv \cdot a_3=a_2\cdot a_3\inv$ lies in $\GO N$ and $(a_1,g)$ in good position over $N$.   By Fact \ref{F:G00} (\ref{I:GNsbgpGM}),  we have that $g$, viewed as an element of $\GO M$, is dense over $M$.  Fact \ref{F:G00} (\ref{I:action}) yields that for some $b$ in $\tp(c/M)$ dense over $M\cup\{a_1, a_2,a_3\}$, the product $b\cdot g$ also belongs to $\tp(c/M)$ (and is dense over $M\cup\{a_1, a_2, a_3\}$). Hence, the triple $(a_1,g, b)$ is in good position over $M$. We need only verify that $(a_1, b, g)$ belongs to $\Lambda_{3\times 2}(S)$. 

By construction, the points 
\[ 
(a_1, c), (a_1\cdot g,c) \text{ and } (g\cdot a_1\cdot g, c) \] all lie in the almost surely $k$-stable internal relation $S$. As each pair (with the order of coordinates reversed) is in good position over $M$, Theorem \ref{T:robuststable_main} implies that for each $i=1,2,3$,
\[\GP(\tp(a_i/M), \tp(c/M))  \subseteq  S,\] so the pairs \[ (a_1, b), (a_1\cdot g,b) \text{ and } (g\cdot a_1\cdot g, b) \] all lie in the almost surely 
$k$-stable internal relation $S$. Analogously, using now that the element $b\cdot g$ of $\tp(b/M)$ is also dense over $M\cup\{a_1, a_1\cdot g, g\cdot a_1\cdot g\}$, we conclude again by Theorem \ref{T:robuststable_main} that each of the pairs \[ (a_1, b\cdot g), (a_1\cdot g, b\cdot g) \text{ and } (g\cdot a_1\cdot g, b\cdot g)\] lies in the almost surely
$k$-stable internal relation $S$, as desired.
\end{proof}
Remark \ref{R:ultr_suitable} immediately yields the following result. 
\begin{cor}\label{C:Lshapes_4AP}
	If the non-standard finite group $G$ is obtained as an ultraproduct of finite abelian 
	groups of odd order, then for every almost surely $k$-stable dense internal relation $S$ of $G\times G$ the collection of $3\times 2$ grids \[ \Lambda_{3\times 2}(S)= \left\{ (a, b, g) \in G^3 : 
\  
\parbox{8cm}{$(a, b), (a +  g, b), (a+2g, b),  (a, b+ g), \\ (a+g, b+g) \text{ 
	and }  (a + 2g,  b + g)  \text{ all lie in } S$}
\right\}\] has positive density in $G^3$.  

In particular,  every dense almost surely $k$-stable subset $Z$ of $G$ (see Definition \ref{D:stab}) contains a $4$-term arithmetic progression $\{z, z+d, 
	z+2d, z+3d\}$ given by a dense pair $(z, d)$ (with $d\ne 0$).   
\end{cor}

Just like Theorem \ref{T:main_corners} gives rise to the finitary Corollary \ref{C:corners_bddexp}, Theorem \ref{T:Lshape} yields a finitary version for sufficiently large abelian groups of odd order. 
\begin{cor}\label{C:Lshape}
Given an integer $k\ge 1$ and a real number $\delta>0$, there is an integer 
$\ell_0(k, \delta)\ge 1$ and real numbers $\theta=\theta(k, \delta)>0$ and  
$\epsilon=\epsilon(k, \delta)>0$ with the following property. 

Let $G$ be a finite abelian group of odd order with $|G|\ge \ell_0$, and consider a relation $S\subseteq G\times G$ of size $|S|\ge \delta |G|^{2}$ such that the collection  
$\HH_k(S)$ of all 
half-graphs of height $k$ induced by $S$ on  $G$ has size $|\HH_k(S)|\le \theta 
|G|^{2k}$. Then the set 
\[ \Lambda_{3\times 2}(S)= \left\{ (a, b, g) \in G^3 :
\  
\parbox{8cm}{$(a, b), 
(a+ g, 
b), (a+2g, b),  (a, b+g), \\ (a+g,  b+ g) \text{ 
	and }  (a+ 2g, b+ g)  \text{ all lie in } S$}
\right\}\]
has size  $|\Lambda_{3\times 2}(S)| \ge \epsilon |G|^{3}$. In 
particular, the relation $S$ contains an $L$-shape. 
\end{cor}

\begin{remark}\label{R:stable_anyconfig}
The astute reader will have noticed that we used very little about the particular equation $x\cdot y=z^2$ in Theorem \ref{T:Lshape}. Indeed, if the non-standard finite group $G$ is built as an ultraproduct of finite groups $(G_\ell)_{\ell\in\N}$ such that for every finite subset $X_\ell$ of $G_\ell$ of density $\sigma$, the collection of tuples $(a_1,\ldots, a_{m+1})$ in $X_\ell^{m+1}$ satisfying a certain pattern has size at least  $f(\sigma)|G_\ell^m|$ for some function $f:\mathbb R\to \mathbb R$ which is uniform in the family $(G_\ell)_{\ell\in\N}$, then we could reproduce the proof of Theorem \ref{T:Lshape} verbatim to obtain grids using differences $g=a_i-a_j$ for any suitable choice of coordinates. 

This applies in particular to the pattern given by the equation $n_1x_1+\cdots+n_mx_m=k z$ with $k=\sum_{j=1}^m n_j$ in $\mathbb Z/p\mathbb Z$, asymptotically as $p$ is large, see for instance \cite[Theorem 3]{tK23}. 
\end{remark}

\end{document}